\documentclass[12pt]{amsart}

\usepackage{times}
\usepackage[usenames,dvipsnames]{color}
\usepackage{tikz}
\usetikzlibrary{decorations.markings,patterns}
\usepackage{graphicx}
\usepackage[utf8]{inputenc}
\usepackage{enumitem}
\usepackage{hyperref}

\title[Dimensions of components of tensor products]
{Dimensions of components of tensor products \\ of the linear groups representations \\ with applications to Beurling-Fourier algebras}

\author {Beno\^\i{}t Collins}
\address{D\'epartement de Math\'ematique et Statistique, Universit\'e d'Ottawa,
585 King Edward, Ottawa, ON, K1N6N5 Canada
\newline
\indent  
CNRS, Institut Camille Jordan Universit\'e  Lyon 1, 43 Bd du 11 Novembre 1918, 69622 Villeurbanne, 
France} 

\email{collins@uottawa.ca}

\author {Hun Hee Lee}
\address{Department of Mathematics, Chungbuk National University, 410 Sungbong-Ro, Heungduk-Gu, Cheongju 361-763, Korea} 

\email{hhlee@chungbuk.ac.kr}

\author {Piotr \'Sniady}
\address{Zentrum Mathematik, M5,
Technische Universität München, \linebreak
Boltzmannstrasse 3,
85748 Garching, Germany \newline \indent
Institute of Mathematics, Polish Academy of Sciences, \linebreak
\mbox{ul.~\'Sniadec\-kich 8,} 00-956 Warszawa, Poland
 \newline
\indent 
Institute of Mathematics,
University of Wroclaw,  \linebreak \mbox{pl.\ Grunwaldzki~2/4,} 50-384
Wroclaw, Poland
} 
\email{piotr.sniady@tum.de, piotr.sniady@math.uni.wroc.pl}

\theoremstyle{plain}
\newtheorem{lemma}{Lemma}[section]
\newtheorem{theorem}[lemma]{Theorem}
\newtheorem{proposition}[lemma]{Proposition}
\newtheorem{corollary}[lemma]{Corollary}

\theoremstyle{definition}

\theoremstyle{remark}

\DeclareMathOperator{\End}{End}
\DeclareMathOperator{\GL}{GL}
\DeclareMathOperator{\SL}{SL}
\DeclareMathOperator{\SU}{SU}
 
\DeclareMathOperator{\Det}{Det}

\newcommand{\C}{{\mathbb{C}}}
\newcommand{\R}{{\mathbb{R}}}
\newcommand{\Z}{{\mathbb{Z}}}

\newcommand{\Disc}{\mathcal{D}}
\newcommand{\Cont}{\mathcal{C}}
\newcommand{\indeks}{\mathcal{I}}
\newcommand{\G}{\mathcal{G}}
\newcommand{\GG}{\widehat{\mathcal{G}}}
\newcommand{\one}{\mathbf{1}}
\newcommand{\om}{\omega}

\newcommand{\norm}[1]{\left\Vert#1\right\Vert}

\DeclareMathOperator{\LIS}{LI}
\DeclareMathOperator{\vol}{vol}

\begin{document}

\setcounter{MaxMatrixCols}{15}

\begin{abstract}
We give universal upper bounds 
on the relative dimensions of isotypic components
of a tensor product of the linear group $\GL(n)$ representations 
and universal upper bounds on the relative dimensions of irreducible components
of a tensor product of the special linear group
$\SL(n)$ representations. 
This problem is motivated by harmonic analysis problems, and we give some
applications of this result in the theory of Beurling-Fourier algebras.
\end{abstract}

\keywords{representations of unitary groups, Kronecker tensor product of representations, Littlewood-Richardson coefficients, Young tableaux, 
Beurling-Fourier algebras}
\subjclass[2010]{%
05E10
{} (Primary)
22E46
,
43A30
,
47L30
,
51F25
{} (Secondary)%
}

\maketitle

\section{Introduction}

\subsection{The main problem for linear groups $\GL(n)$}
In this paper we are interested in the following question: let $\lambda,\mu$ be two irreducible representations 
of the linear group $\GL(n)$ and consider the decomposition of their tensor product 
$\lambda\otimes\mu$ into \emph{isotypic} components. \emph{How big the dimension of such an 
isotypic component can be?}

For irreducible representations $\lambda,\mu,\nu$ we denote by $c_{\lambda,\mu}^{\nu}$ the 
\emph{Littlewood-Richardson coefficient}, i.e. the multiplicity of the irreducible representation $\nu$ 
in the Kronecker tensor product $\lambda\otimes \mu$. For an irreducible representation $\rho$ 
we denote by $d_\rho$ its dimension.
With these notations, the dimension of the isotypic component $[\nu]$ of $\lambda\otimes\mu$ is  
equal to $c_{\lambda,\mu}^{\nu} \ d_\nu$. Our goal will be to give an upper bound for the fraction
\begin{equation}
\label{eq:LR-measure}
P_{\lambda,\mu}(\nu):= \frac{c_{\lambda,\mu}^{\nu} \ d_\nu}{d_\lambda \ d_\mu}  
\end{equation}
which can be interpreted as the \emph{relative dimension} of the isotypic component $[\nu]$ in $\lambda\otimes\mu$.

Equation \eqref{eq:LR-measure} defines a probability distribution
 $P_{\lambda,\mu}$ (called the \emph{Littlewood-Richardson measure}) on irreducible representations. 
 This probability measure can be interpreted as a distribution of a random irreducible component of the 
 Kronecker tensor product $\lambda\otimes\nu$, where each irreducible component is sampled with a 
 probability proportional to its dimension. Our problem can be therefore equivalently formulated as finding 
 an upper bound for the atoms of Littlewood-Richardson measure.

\subsection{The main result for linear groups $\GL(n)$}
The main result of this paper is the following partial answer to the above problem.
\begin{theorem}
\label{theorem:main-theorem}
Let $n\geq 1$ be a fixed integer. There exists a constant $C_{n}$ such that for any
irreducible representations $\lambda,\mu,\nu$ of\/ $\GL(n)$ the atom of the Littlewood-Richardson measure is bounded from above as follows:
\begin{equation}
\label{eq:main-gln}
P_{\lambda,\mu}(\nu):=\frac{c_{\lambda,\mu}^{\nu} \ d_\nu}{d_\lambda \ d_\mu}  \leq C_n \left(\frac{1}{\lambda_1-\lambda_n}+\frac{1}{\mu_1-\mu_n} \right).
\end{equation}
\end{theorem}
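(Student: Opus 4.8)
The plan is to reduce the $\GL(n)$ statement to an asymptotic statement about representations of $\GL(n)$ with "long" shapes, and then exploit the fact that such representations, after an appropriate rescaling, converge (in a suitable sense) to representations governed by free probability. More concretely, let me fix $n$ and write $\ell_\lambda = \lambda_1 - \lambda_n$, and similarly for $\mu,\nu$. I would first dispose of the degenerate cases: if either $\ell_\lambda$ or $\ell_\mu$ is bounded by some absolute constant, the bound is trivially satisfied for a suitable $C_n$ since $P_{\lambda,\mu}(\nu) \le 1$ always. So the real content is when both $\ell_\lambda$ and $\ell_\mu$ are large, and we must show $P_{\lambda,\mu}(\nu) = O\!\left(\frac{1}{\ell_\lambda} + \frac{1}{\ell_\mu}\right)$ with an implied constant depending only on $n$.

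The key structural observation is that $P_{\lambda,\mu}$ is a probability measure on irreducible representations $\nu$ appearing in $\lambda \otimes \mu$; bounding the largest atom is equivalent to showing that this measure is "spread out". For $\GL(n)$ the Pieri / Littlewood-Richardson rules tell us that $\nu$ is obtained from $\lambda$ by adding the boxes of $\mu$ according to a lattice-word rule, so $\nu_1 - \nu_n$ is comparable to $\ell_\lambda + \ell_\mu$, and the number of $\nu$'s that can occur is of polynomial size in $\ell_\lambda + \ell_\mu$. Since a measure supported on $N$ points has an atom of size at least $1/N$, a naive counting bound goes the wrong way; the point is rather that the dimension weights $c_{\lambda,\mu}^\nu d_\nu / (d_\lambda d_\mu)$ genuinely disperse the mass. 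The natural mechanism is the following: the Weyl dimension formula expresses $d_\lambda$ as a product over the $\binom{n}{2}$ pairs $i<j$ of factors $\frac{(\lambda_i - \lambda_j) + (j-i)}{j-i}$, and for long shapes each such factor is comparable to the corresponding difference of coordinates; analysing $\frac{c_{\lambda,\mu}^\nu d_\nu}{d_\lambda d_\mu}$ by substituting the Weyl formula converts the problem into estimating a certain ratio of products of linear forms, where the key cancellation exhibits the $\frac{1}{\ell_\lambda} + \frac{1}{\ell_\mu}$ decay. In parallel, the probabilistic interpretation via the quantized random walk: the Littlewood-Richardson measure is, up to the recentering, the distribution of the sum of two independent "Horn" spectra, and atoms of such a sum are controlled by the modulus of continuity of each summand's distribution, which scales like $1/\ell$.

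I expect the main obstacle to be making the dispersion estimate uniform in $n$-independent data while keeping the constant depending only on $n$, i.e. controlling the contribution of the "boundary" cases where some of the parts $\lambda_i$ (or $\mu_i$, $\nu_i$) are close together even though $\lambda_1 - \lambda_n$ is large — these are the configurations where several Weyl factors simultaneously degenerate and the product estimate is most delicate. A robust way around this is to prove the bound first for $\SL(2)$ essentially by hand (there the Clebsch-Gordan rule makes $P_{\lambda,\mu}$ almost uniform on an interval of length $\sim \min(\ell_\lambda, \ell_\mu)$, giving the bound directly), then bootstrap to general $\GL(n)$ by restricting to a suitable $\SL(2)$ (or more generally by an induction on rank that peels off one coordinate at a time, comparing $P^{(n)}_{\lambda,\mu}$ to $P^{(n-1)}$ via branching). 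The combinatorial heart is therefore: (i) an exact or asymptotic $\SL(2)$ computation, (ii) a monotonicity/comparison lemma letting one pass from $\GL(n)$ to $\GL(n-1)$ without losing more than an $n$-dependent constant, and (iii) the inductive assembly. I would carry these out in that order, expecting step (ii) — the comparison across rank — to carry most of the technical weight.
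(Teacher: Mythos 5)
There is a genuine gap: your text is a strategy sketch whose two load-bearing steps are exactly the ones that are never supplied. First, you invoke the heuristic that the Littlewood-Richardson measure is ``the distribution of the sum of two independent Horn spectra'' whose atoms decay like $1/\ell$. As an exact statement this is false; it is only an asymptotic fact (the free-probability limit of \cite{CS09}), and the whole difficulty of the theorem is uniformity over all $\lambda,\mu,\nu$, not a limit regime. What is true, and what the paper proves via the plactic Littlewood-Richardson rule and the RSK/Gelfand-Tsetlin correspondence, is an exact distributional identity for the \emph{first coordinate} only: $\nu_1 \stackrel{\text{dist}}{=} \max_{k+l=n+1}(a_k+b_l)$, where $a_k,b_l$ are first-row entries of independent uniformly random patterns of shapes $\lambda,\mu$ (Proposition \ref{lem:first-row-A}); one then bounds $P_{\lambda,\mu}(\nu)$ by $P(\nu_1=x)$. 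Second, even granting such an identity, the anti-concentration input you assume --- ``the modulus of continuity of each summand scales like $1/\ell$'' --- is not correct as stated and is where the real work lies. For a random pattern of shape $\lambda$ one only gets $P(a_k=x)\le D_n/(\lambda_1-\lambda_{n+1-k})$ (Proposition \ref{lem:firstrowbound}, proved by a delicate discrete-versus-continuous polytope volume argument), and this is vacuous when $\lambda_{n+1-k}=\lambda_1$ (e.g.\ $\lambda=(N,\dots,N,0)$ makes $a_k$ deterministic for small $k$ while $\lambda_1-\lambda_n=N$ is large). Consequently one cannot bound every term $P(a_k+b_l=x)$ by the same $1/\ell_\lambda$; the paper needs a pigeonhole argument locating a large gap $\lambda_i-\lambda_{i+1}$, a contragredient symmetry to arrange $i+j\le n$, and then, for each $(k,l)$ with $k+l=n+1$, the observation that at least one of the two factors falls in the ``good'' range. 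None of this structure appears in your plan, and without it the convolution bound simply does not close.

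Your fallback route --- compute $\SL(2)$ by hand and then induct on rank ``via branching'' or by restricting to a suitable $\SL(2)$ subgroup --- is also not a proof, and you yourself flag step (ii) as carrying the technical weight without offering it. The obstruction is concrete: restriction to $\GL(n-1)$ (or to an $\SL(2)$ subgroup) does not commute with the operation ``decompose the tensor product and weight components by dimension,'' so an atom of the $\GL(n)$ Littlewood-Richardson measure does not map to an atom (or even to a boundedly-many-to-one preimage) of a lower-rank Littlewood-Richardson measure; a comparison lemma of the kind you posit would have to control multiplicities of branching and of the lower-rank tensor decomposition simultaneously, and no mechanism for that is indicated. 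Likewise the claim that $\nu_1-\nu_n$ is comparable to $\ell_\lambda+\ell_\mu$ is false in general (take $\mu$ contragredient to $\lambda$, so the trivial representation occurs). In short, the reduction to large $\ell_\lambda,\ell_\mu$ is fine but trivial; the exact combinatorial realization of $\nu_1$ and the uniform anti-concentration estimate for Gelfand-Tsetlin patterns --- the actual content of the theorem --- are missing.
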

Here, $\lambda_1\geq\cdots\geq \lambda_n$ and $\mu_1\geq\cdots\geq \mu_n$ are the components of the highest 
weight of $\lambda$ and $\mu$, respectively.
The notations used in the above inequality will be recalled in Section \ref{sec:representation-theory}. 
We postpone its proof to Section 
\ref{sec:proof-main-result}.
We will see that this result is optimal in a sense which will be clarified at the end of the paper. 

\subsection{The main result for special linear groups $\SL(n)$}
In this paper we are also interested in the following modification of the above problem: 
let $\lambda,\mu$ be two irreducible representations of the special linear group $\SL(n)$ and consider the decomposition of their tensor product $\lambda\otimes\mu$ into \emph{irreducible} components. \emph{How big the dimension of such an irreducible component can be?} 

A partial answer for this problem is given by the following result, which is a corollary to our main theorem:
\begin{corollary}
\label{cor:main-theorem-SLn}
Let $n\geq 1$ be a fixed integer. There exists a constant $C_{n}$ such that for any
irreducible representations $\lambda,\mu,\nu$ of\/ $\SL(n)$, if $\nu$ contributes (with multiplicity at least $1$) to the decomposition of the Kronecker tensor product $\lambda\otimes \mu$ into irreducible components, then its relative dimension is bounded from above as follows:
\begin{equation}
\label{eq:main-sln}
\frac{d_\nu}{d_\lambda \ d_\mu}  \leq C_n \left(\frac{1}{\lambda_1}+\frac{1}{\mu_1} \right). 
\end{equation}
\end{corollary}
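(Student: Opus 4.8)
The plan is to deduce Corollary~\ref{cor:main-theorem-SLn} from Theorem~\ref{theorem:main-theorem} via the standard dictionary between the irreducible representations of $\SL(n)$ and the polynomial irreducible representations of $\GL(n)$. First I would recall that every irreducible representation of $\SL(n)$ arises, by restriction, from a polynomial irreducible representation of $\GL(n)$, and that this polynomial lift becomes unique once we normalize its highest weight $\lambda_1\geq\cdots\geq\lambda_n$ so that $\lambda_n=0$; two polynomial $\GL(n)$-representations restrict to isomorphic $\SL(n)$-representations if and only if their highest weights differ by a constant vector $(k,\dots,k)$, and restriction to $\SL(n)$ does not change the dimension. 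I would therefore fix, for $\lambda$ and $\mu$, their normalized polynomial $\GL(n)$-lifts, so that $\lambda_n=\mu_n=0$; under this normalization the quantity $\lambda_1-\lambda_n$ occurring on the right-hand side of \eqref{eq:main-gln} is simply $\lambda_1$, which is the quantity in \eqref{eq:main-sln}. (In fact $\lambda_1-\lambda_n$ is invariant under tensoring with a power of the determinant, so it is a well-defined function of the underlying $\SL(n)$-representation.)

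Next I would pin down the multiplicity. Since $\lambda$ and $\mu$ are polynomial, every irreducible component of the $\GL(n)$-tensor product $\lambda\otimes\mu$ is again polynomial and has exactly $|\lambda|+|\mu|$ boxes. Hence, among the $\GL(n)$-lifts $\tilde\nu+(k,\dots,k)$, $k\in\Z$, of a fixed irreducible $\SL(n)$-representation (here $\tilde\nu$ denotes its normalized lift, with last part $0$), at most one can occur in the decomposition of $\lambda\otimes\mu$, namely the one with $k=(|\lambda|+|\mu|-|\tilde\nu|)/n$. Consequently, the multiplicity with which $\tilde\nu$ contributes to $\lambda\otimes\mu$ \emph{as $\SL(n)$-representations} equals the single Littlewood--Richardson coefficient $c_{\lambda,\mu}^{\nu}$, where $\nu:=\tilde\nu+(k,\dots,k)$; moreover $d_\nu=d_{\tilde\nu}$.

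Now assume $\nu$ (equivalently $\tilde\nu$) contributes, so that $c_{\lambda,\mu}^{\nu}\geq 1$. Applying Theorem~\ref{theorem:main-theorem} to the $\GL(n)$-representations $\lambda,\mu,\nu$ and using $\lambda_n=\mu_n=0$, I would chain
\begin{equation*}
\frac{d_\nu}{d_\lambda\, d_\mu}\ \leq\ \frac{c_{\lambda,\mu}^{\nu}\, d_\nu}{d_\lambda\, d_\mu}\ =\ P_{\lambda,\mu}(\nu)\ \leq\ C_n\left(\frac{1}{\lambda_1-\lambda_n}+\frac{1}{\mu_1-\mu_n}\right)\ =\ C_n\left(\frac{1}{\lambda_1}+\frac{1}{\mu_1}\right).
\end{equation*}
Since $d_\nu=d_{\tilde\nu}$ and the dimensions of $\lambda$ and $\mu$ are unaffected by restriction to $\SL(n)$, this is precisely \eqref{eq:main-sln}, with the very same constant $C_n$ as in Theorem~\ref{theorem:main-theorem}.

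The computational content is minimal here, since the main theorem does all the work. The one point that needs genuine care is the bookkeeping in the second step: one must check that restricting from $\GL(n)$ to $\SL(n)$ collapses the $\Z$-family of determinant twists of $\nu$ to a single $\SL(n)$-representation but does \emph{not} merge distinct irreducible components of $\lambda\otimes\mu$, because in a tensor product of polynomial representations the number of boxes is fixed; this is exactly what makes the $\SL(n)$-multiplicity a single Littlewood--Richardson coefficient rather than an a priori unbounded sum, and hence directly controllable by Theorem~\ref{theorem:main-theorem}.
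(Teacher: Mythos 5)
Your proposal is correct and follows essentially the same route as the paper: lift $\lambda,\mu$ to $\GL(n)$-weights normalized by $\lambda_n=\mu_n=0$, pick a $\GL(n)$-lift $\nu'$ of $\nu$ occurring in $\widetilde\lambda\otimes\widetilde\mu$ (so $c_{\widetilde\lambda,\widetilde\mu}^{\nu'}\ge 1$), and apply Theorem~\ref{theorem:main-theorem}, discarding the Littlewood--Richardson coefficient. Your additional box-counting argument that the lift of $\nu$ appearing in the product is \emph{unique} is correct but not needed, since the estimate only requires the existence of some lift with multiplicity at least one, which is exactly what the paper uses.
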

The notations used in the above inequality will be recalled in Section \ref{subsec:sln}, where we will also present its proof.

\subsection{The case of unitary groups and special unitary groups}
The representation theory of the unitary group $U(n)$ is exactly the same as that of the linear group $\GL(n)$, namely the restriction map gives a one to one map; its inverse is given by the analytic continuation. 
In particular, the correspondence between irreducible representations and highest weights holds also for $U(n)$.
For this reason in the formulation of Theorem \ref{theorem:main-theorem} one can replace the 
representations of the linear groups $\GL(n)$ by the representations of the unitary groups $U(n)$ and 
the result holds true without any modifications.

Analogous relationship holds between the representation theory of the special unitary group $\SU(n)$ and the special linear group $\SL(n)$, for this reason in the formulation of Corollary \ref{cor:main-theorem-SLn} one can replace representations of $\SL(n)$ by representations of $\SU(n)$.

\subsection{Applications to Beurling-Fourier algebras}

Our paper is motivated by the work of Mahya Ghandehari, Hun Hee Lee, Ebrahim Samei and 
Nico Spronk \cite{GhandehariLeeSameiNico2012} and gives a proof of their conjecture 
(Conjecture 1, p.~19). Our main theorem implies that the conjecture is true for any integer 
$n\ge 2$, whilst it was proved for $2\le n \le 5$ in an elementary way in \cite{GhandehariLeeSameiNico2012}.

In this subsection we briefly describe what are Beurling-Fourier algebras and implications of our 
main results on them. See \cite{LS12,LST12} 
 for the details on Beurling-Fourier algebras

Let $G$ be a compact group and $\widehat{G}$ be the set of equivalence classes of unitary irreducible representations of $G$. The Fourier algebra $A(G)$ of $G$ is defined as
	$$A(G):=\left\{f\in C(G) : \norm{f}_{A(G)}:=\sum_{\pi\in \widehat{G}}d_\pi \norm{\widehat{f}(\pi)}_1 < \infty \right\}.$$
Here, $\widehat{f}(\pi)$ denotes the Fourier transform given by
	$$\widehat{f}(\pi) := \int_G f(x)\ \overline{\pi}(x)\ dx \in M_{d_\pi}(\mathbb{C})$$
where $dx$ denotes the normalized Haar measure on $G$; $\overline{\pi}$ denotes the conjugate representation of $\pi$; and $\norm{\cdot}_1$ is the trace norm.
It is well known that the Fourier algebra is actually a Banach algebra under the pointwise multiplication.

The Fourier algebra can be defined for any locally compact groups (see \cite{Em64}) and is regarded 
as one of the most fundamental examples of commutative Banach algebras associated to groups. 
When the (compact) group $G$ is abelian, $A(G)$ is nothing but the group algebra $L^1(\widehat{G})$ 
of the Pontryagin dual $\widehat{G}$, so that Fourier algebras are usually called the ``dual" object of group 
algebras. In general, Fourier algebras are quite far away from operator algebras (i.e.~norm-closed 
subalgebras of $B(\mathcal{H})$ for some Hilbert space $\mathcal{H}$) including $C^*$-algebras. However, by putting some
weights on $A(G)$ for a compact group $G$ we can make weighted versions of $A(G)$ much closer to 
operator algebras.

We call a function $\om : \widehat{G} \to [1,\infty)$ a {\it weight} if
	\begin{equation}\label{eq-weight1}
	\om(\sigma) \le \om(\pi)\ \om(\pi')
	\end{equation}
for any $\pi,\pi'\in \widehat{G}$ and $\sigma\in \widehat{G}$ appearing as a component of the irreducible decomposition of $\pi \otimes \pi'$.

We define the {\it Beurling-Fourier algebra} $A(G,\om)$ by
	$$A(G,\om):=\left\{f\in C(G) : \norm{f}_{A(G,\om)}=\sum_{\pi\in \widehat{G}}d_\pi\ \om(\pi)\norm{\widehat{f}(\pi)}_1 <\infty \right\}.$$	
There is a natural isometry between $A(G)$ and $A(G,\om)$ (see \cite{LS12} for the details),
so that we can endow an operator space 
structure on $A(G,\om)$ coming from $A(G)$ (as the predual of the group von Neumann algebra $\operatorname{VN}(G)$) 
through this isometry. 
Then from the condition \eqref{eq-weight1} one can show that $A(G,\om)$ 
is a completely contractive Banach algebra under the pointwise multiplication (\cite{LS12}).

Fundamental examples of weights on $\widehat{G}$ are given by the following polynomial dependence on dimensions of the representations. For $\alpha \geq 0$, we define $\om_\alpha : \widehat{G} \to [1,\infty)$ by
	$$\om_\alpha(\pi) = d^\alpha_\pi \;\; \ \ (\pi\in \widehat{G}).$$
Clearly $\om_\alpha$ satisfies the condition \eqref{eq-weight1}, and so, it defines a weight on $\widehat{G}$; it is called the {\it dimension weight of order $\alpha$}.

In \cite[Theorem 4.9]{GhandehariLeeSameiNico2012} it has been shown that $A(\SU(n), \om_\alpha)$ is completely 
isomorphic to an operator algebra under assumption that the 
estimate \eqref{eq:main-sln} for $\SU(n)$ holds true (this assumption was referred to as \cite[Conjecture 1]{GhandehariLeeSameiNico2012}). 
Since our main result says that the 
conjecture is indeed true for all $n\ge 2$, this implies the following.

	\begin{theorem}\label{thm-SU(n)-dim}
	Let $\om_\alpha$ be the dimension weight of order $\alpha>\frac{d(\SU(n))}{2}=\frac{n^2-1}{2}$ on $\widehat{\SU(n)}$, $n\ge 2$. Then $A(\SU(n), \om_\alpha)$ is completely isomorphic to an operator algebra.
	\end{theorem}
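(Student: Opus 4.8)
The plan is to reduce Theorem \ref{thm-SU(n)-dim} to a result that is already available in \cite{GhandehariLeeSameiNico2012}, by verifying that the hypothesis of that result now holds unconditionally. Concretely, \cite[Theorem 4.9]{GhandehariLeeSameiNico2012} asserts that for $\alpha > \frac{n^2-1}{2}$ the Beurling-Fourier algebra $A(\SU(n),\om_\alpha)$ is completely isomorphic to an operator algebra \emph{provided} the dimension estimate \cite[Conjecture 1]{GhandehariLeeSameiNico2012} is valid, and that conjecture is exactly the statement that there is a constant $C_n$ with $\frac{d_\nu}{d_\lambda d_\mu} \leq C_n\bigl(\frac{1}{\lambda_1}+\frac{1}{\mu_1}\bigr)$ whenever $\nu$ occurs in $\lambda\otimes\mu$ for representations of $\SU(n)$. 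So the first step is simply to invoke Corollary \ref{cor:main-theorem-SLn}, together with the remark that the representation theories of $\SL(n)$ and $\SU(n)$ coincide under restriction/analytic continuation, to conclude that \cite[Conjecture 1]{GhandehariLeeSameiNico2012} holds for every $n\geq 2$.

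Having established the hypothesis, the second step is to quote \cite[Theorem 4.9]{GhandehariLeeSameiNico2012} verbatim: with the conjecture now a theorem, its conclusion — that $A(\SU(n),\om_\alpha)$ is completely isomorphic to an operator algebra for $\alpha > \frac{d(\SU(n))}{2} = \frac{n^2-1}{2}$ — becomes unconditional, which is precisely the assertion of Theorem \ref{thm-SU(n)-dim}. One should double-check the indexing conventions: the highest weight of an $\SU(n)$ representation is recorded by a partition $\lambda_1 \geq \cdots \geq \lambda_{n-1} \geq \lambda_n = 0$ (or equivalently by the tuple of consecutive differences), so the quantities $\lambda_1$ and $\mu_1$ appearing in \eqref{eq:main-sln} match the normalization used in \cite{GhandehariLeeSameiNico2012}; this is where the passage from the $\GL(n)$ bound \eqref{eq:main-gln}, which features $\lambda_1-\lambda_n$, to the $\SL(n)$ bound \eqref{eq:main-sln}, which features $\lambda_1$, must be exactly the one carried out in the proof of Corollary \ref{cor:main-theorem-SLn} in Section \ref{subsec:sln}.

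The only genuine content to be supplied here, beyond citation, is the observation that no hypothesis of \cite[Theorem 4.9]{GhandehariLeeSameiNico2012} other than the dimension estimate needs checking — in particular the compactness of $\SU(n)$ and the fact that $\om_\alpha$ is a weight in the sense of \eqref{eq-weight1} are immediate, and the threshold $\frac{n^2-1}{2}$ is just half the real dimension $d(\SU(n)) = n^2-1$ of the group. The main (and only) obstacle was proving the dimension estimate itself, i.e. Theorem \ref{theorem:main-theorem} and its corollary; once that is in hand, Theorem \ref{thm-SU(n)-dim} follows with no further analytic work. Thus the proof is essentially a one-line deduction, and I would present it as such, pointing the reader to \cite[Theorem 4.9]{GhandehariLeeSameiNico2012} for the operator-algebra machinery and to Corollary \ref{cor:main-theorem-SLn} for the input it requires.
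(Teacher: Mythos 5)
Your proposal is correct and is exactly the paper's argument: Corollary \ref{cor:main-theorem-SLn}, transported to $\SU(n)$ via the $\SL(n)$--$\SU(n)$ correspondence, establishes \cite[Conjecture 1]{GhandehariLeeSameiNico2012}, and then \cite[Theorem 4.9]{GhandehariLeeSameiNico2012} yields the conclusion unconditionally. No gaps; this is the same one-line deduction the paper makes.
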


Note that the above result is not true for $U(n)$, $n\ge 2$ (in general, for any compact connected non-simple Lie groups, see \cite[Theorem 4.7]{GhandehariLeeSameiNico2012}) even though the representations of $U(n)$, $n\ge 2$ satisfy the estimate \eqref{eq:main-gln}.

\subsection{Viewpoint of representation theory and random matrix theory}

The main result of this paper is also of intrinsic interest in representation theory and also random matrix theory. 
According to it, the `widths' of representations tell something about the relative dimensions
of the Littlewood-Richardson components, namely any irreducible representation appearing 
in the tensor product cannot have a too large relative dimension if the width of both tensored irreducible representations is large enough. 
This result was known for `typical' irreducible representations 
(see e.g. \cite{CS09})
but here we show that it holds true uniformly, at the expense of a worse, but  
asymptotically optimal estimate.
Thus the difficulty of our main result lies in its uniformity.

Our estimate relies on a combinatorial lemma proved in Section \ref{sec:firstrowbound} and it
turns out that this lemma admits a direct counterpart in random matrix that has its own interest.
We state this as Lemma \ref{cor4rmt}.

\subsection{Organization of the paper}
In Section \ref{sec:representation-theory} we recall some notations and facts
from representation theory.
In Section \ref{subsec:LR-measure} we give an auxiliary result: 
a convenient description of the probability distribution of the first coordinate $\mu_1$ of a random representation $\mu=(\mu_1,\mu_2,\dots)$ distributed according to the Littlewood-Richardson measure $P_{\lambda,\mu}$.
Following this description, Section \ref{sec:firstrowbound} gathers the properties of this probability distribution which are necessary in order to prove our main theorem.
Section \ref{sec:proof-main-result} contains the proof of the main theorem, 
and in Section \ref{sec:optimal} we explain the sense in which our result is optimal.

\section{Representation theory of classical groups}
\label{sec:representation-theory}

\subsection{Representations of linear groups $\GL(n)$ and weights}
In this article  $n\geq 1$ is a fixed integer. We say that $\lambda$ is a \emph{weight} if 
$\lambda=(\lambda_1,\dots,\lambda_n)\in\Z^n$ is such that $\lambda_1\geq \cdots\geq \lambda_n$.
We denote by $\widehat{\GL(n)}$ the collection of irreducible representations of the linear group  $\GL(n)$, up to equivalence.
There is a canonical bijective correspondence between the set $\widehat{\GL(n)}$ of (equivalence classes of) irreducible representations and the set of weights
which to a representation associates its \emph{highest weight}.
In order to simplify the notation we will identify an irreducible representation of $\GL(n)$ with the corresponding weight.
We refer to \cite{Fulton} for an extensive treatment of the subject. 
Throughout the whole paper, we work with the field of complex numbers $\C$. In particular, $\GL(n)$ means the linear group $\GL(n,\C)$ and $\SL(n)$ means the special linear group $\SL(n,\C)$.

\subsection{Kronecker tensor product}

If $\rho_1:\GL(n)\rightarrow \End V_1$ and $\rho_2:\GL(n)\rightarrow \End V_2$ are representations 
of the same group $\GL(n)$, we denote by $\rho_1\otimes \rho_2:\GL(n)\rightarrow\End (V_1\otimes V_2)$ 
their \emph{Kronecker tensor product} given by the diagonal action on simple tensors:
$$ \big((\rho_1\otimes \rho_2)(g)\big) (v\otimes w) := \rho_1(g)(v) \otimes \rho_2(g)(w)
$$ 
for $g\in\GL(n)$, $v\in V_1$, $w\in V_2$.

\subsection{Representations of $\SL(n)$}
\label{subsec:sln}
Here we describe briefly the irreducible representations of the special linear group $\SL(n)$ 
of matrices of determinant one,
and their relation 
with the irreducible representations of $\GL(n)$.
It is known, cf \cite[Section 15.5]{MR93a:20069},
that any irreducible representation of $\GL(n)$, when restricted to $\SL(n)$, yields again an irreducible representation. 
Besides, this map is surjective and its quotient can be precisely
described as follows: two representations $\lambda$, $\mu$ of $\GL(n)$ yield the same representation 
when restricted to $\SL(n)$ if and only if there exists an integer $k$ such that $\mu +k \mathbf{1}=\lambda$.

Unsurprisingly, 
the one-dimensional representation given by the determinant is trivial on $\SL(n)$ but non-trivial on $\GL(n)$. Its highest weight is equal to $\mathbf{1}=(1,\ldots , 1)$. 
The highest weight of the trivial representation is equal to $(0,\ldots ,0)$.
As we have seen, they restrict to the same representation of $\SL(n)$.

Put differently, it is possible to parametrize the irreducible representations of $\SL(n)$ as those weights $\lambda=(\lambda_1,\dots,\lambda_n)$ for which the last component is equal to zero: $\lambda_n=0$.

We are now ready to show Corollary \ref{cor:main-theorem-SLn}, assuming that Theorem \ref{theorem:main-theorem} holds true.
\begin{proof}[Proof that Theorem \ref{theorem:main-theorem} implies Corollary \ref{cor:main-theorem-SLn}]
Let $\lambda,\mu$ be (as in Corollary \ref{cor:main-theorem-SLn}) representations of $\SL(n)$.
We view them as weights such that their last components are equal to zero:
$\lambda_n=0$, $\mu_n=0$. These weights give rise to representations
of $\GL(n)$ which will be denoted by $\widetilde{\lambda}$, $\widetilde{\mu}$.

The tensor product $\lambda\otimes \mu$ of representations of $\SL(n)$ is nothing else but a restriction of the tensor product $\widetilde{\lambda}\otimes\widetilde{\mu}$ of representations of $\GL(n)$.
Furthermore, the decomposition of $\widetilde{\lambda}\otimes\widetilde{\mu}$ into irreducible components gives rise (by restriction) to a decomposition of $\lambda\otimes\mu$ into irreducible components.
It follows that
the initial assumption that $[\nu]$ appears in the decomposition of the tensor product $\lambda\otimes\mu$ 
implies that there exists some weight $\nu'$ such that:
\begin{itemize}
 \item $\nu'$ contributes to the decomposition of the tensor product $\widetilde{\lambda}\otimes \widetilde{\mu}$ of representations of $\GL(n)$; 
in other words $c_{\widetilde{\lambda},\widetilde{\mu}}^{\nu'}\geq 1$;
 \item $\nu'$ is an irreducible representation of $\GL(n)$ which restricted to $\SL(n)$ coincides with representation $\nu$.
\end{itemize}

We apply Theorem \ref{theorem:main-theorem} for $\widetilde{\lambda}$, $\widetilde{\mu}$, $\nu'$; Equation \eqref{eq:main-gln} takes the form
$$\frac{c_{\widetilde{\lambda},\widetilde{\mu}}^{\nu'} \ d_\nu}{d_\lambda \ d_\mu}=\frac{c_{\widetilde{\lambda},\widetilde{\mu}}^{\nu'} \ d_{\nu'}}{d_{\widetilde{\lambda}} \ d_{\widetilde{\mu}}}  \leq C_n \left(\frac{1}{\lambda_1}+\frac{1}{\mu_1} \right).$$
Taking into account $c_{\widetilde{\lambda},\widetilde{\mu}}^{\nu'}\geq 1$ this implies Equation \eqref{eq:main-sln} and finishes the proof of Corollary \ref{cor:main-theorem-SLn}.
\end{proof}

\section{Littlewood-Richardson measure and Gelfand-Tsetlin patterns}
\label{subsec:LR-measure}

\newcommand{\ler}{\rotatebox[origin=c]{90}{$\leq$}}

In Section \ref{subsec:gen-tableau} we  recall the definition of \emph{Gelfand-Tsetlin patterns}. 
As we shall see, patterns provide a concrete model for Littlewood-Richardson measure 
(Lemma \ref{lem:first-row} \ref{enum:lem:first-row}). For the purposes of the current paper we do not need this 
kind of result in full generality; for this reason in Section \ref{subsec:keylemma} we will state 
Lemma \ref{lem:first-row-A} which concerns the simplified setup: the first coordinate of a random 
weight distributed according to Littlewood-Richardson measure. This lemma is the key element of 
the proof of Lemma \ref{lem:firstrowbound} in Section \ref{sec:firstrowbound}, which will be used in 
the proof of Theorem \ref{theorem:main-theorem} (the main theorem). The remaining part of this section 
is devoted to the proof of Lemma \ref{lem:first-row-A}.

\subsection{Gelfand-Tsetlin patterns}
\label{subsec:gen-tableau}

Let $\lambda$ be a weight. We say that
$$A=\big(a_l(i)\big)_{l\in \{i,\ldots, n-1\},\ i\in\{1,\ldots, n-1\}}\in \Z^{n(n-1)/2}$$
is a \emph{Gelfand-Tsetlin pattern} of shape $\lambda$ (or, shortly, \emph{pattern}) if the following system of inequalities is fulfilled:
\begin{equation}
\label{eq:system}
\small
\begin{matrix}
a_1(1) & \leq & a_2(1) & \leq & \cdots & \leq & a_{n-2}(1) & \leq & a_{n-1}(1) & \leq & \lambda_1 \\
\ler &  & \ler &  & &   &  \ler & &\ler \\
a_2(2) & \leq & a_3(2) & \leq & \cdots & \leq & a_{n-1}(2) & \leq  & \lambda_2 \\
 \ler & & \ler \\
\vdots & & \vdots \\ 
 \ler & & \ler \\
a_{n-1}(n-1) & \leq & \lambda_{n-1} \\
  \ler \\
\lambda_n.
\end{matrix}
\end{equation}
This system of inequalities can be represented by an oriented graph $\G$ from Figure \ref{fig:graph}. 

The first row of \eqref{eq:system} will deserve special attention, for this reason we will use simplified notation
\begin{align*}
 a_l& :=a_l(1) \qquad \text{for } l\in\{1,\dots,n-1\}. \\
\intertext{It will be also convenient to define}
 a_n & :=\lambda_1.
\end{align*}
Analogously, if $B=\big(b_l(i)\big)$
is a pattern of shape $\mu$ we denote
\begin{align*} 
b_l & :=b_l(1) \qquad \text{for } l\in\{1,\dots,n-1\} \\
\intertext{and}
 b_n& :=\mu_1.
\end{align*}

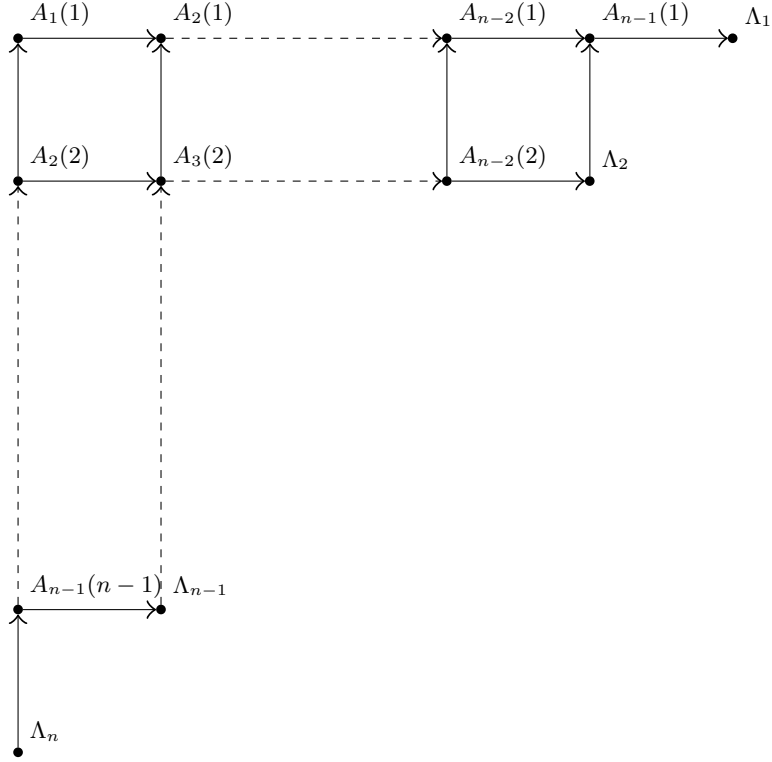
\begin{figure}
\centerline{
\begin{tikzpicture}[scale=1.9,nod/.style={circle,draw=black,fill=black,thick,
inner sep=0pt,minimum size=1mm}]
\scriptsize
\node[label=45:$A_{1}(1)$] (a11) at (0,0) [nod] {};
\node[label=45:$A_{2}(1)$] (a21) at (1,0) [nod] {};
\node[label=45:$A_{2}(2)$] (a12) at (0,-1) [nod] {};
\node[label=45:$A_{3}(2)$] (a22) at (1,-1) [nod] {};
\draw[decoration={markings,mark=at position 1 with {\arrow[scale=2]{>}}},
    postaction={decorate},
    shorten >=0.4pt] (a11) -- (a21) ;
\draw[decoration={markings,mark=at position 1 with {\arrow[scale=2]{>}}},
    postaction={decorate},
    shorten >=0.4pt] (a12) -- (a22) ;
\draw[decoration={markings,mark=at position 1 with {\arrow[scale=2]{>}}},
    postaction={decorate},
    shorten >=0.4pt] (a12) -- (a11) ;
\draw[decoration={markings,mark=at position 1 with {\arrow[scale=2]{>}}},
    postaction={decorate},
    shorten >=0.4pt] (a22) -- (a21) ;
\node[label=45:$A_{n-2}(1)$] (an11) at (3,0) [nod] {};
\node[label=45:$A_{n-2}(2)$] (an12) at (3,-1) [nod] {};
\node[label=45:$A_{n-1}(1)$] (an21) at (4,0) [nod] {};
\node[label=45:$\Lambda_1$]  (an31) at (5,0) [nod] {};
\node[label=45:$\Lambda_2$]  (an22) at (4,-1) [nod] {};
\draw[decoration={markings,mark=at position 1 with {\arrow[scale=2]{>}}},
    postaction={decorate},
    shorten >=0.4pt] (an11) -- (an21) ;
\draw[decoration={markings,mark=at position 1 with {\arrow[scale=2]{>}}},
    postaction={decorate},
    shorten >=0.4pt] (an21) -- (an31) ;
\draw[decoration={markings,mark=at position 1 with {\arrow[scale=2]{>}}},
    postaction={decorate},
    shorten >=0.4pt] (an22) -- (an21) ;
\draw[decoration={markings,mark=at position 1 with {\arrow[scale=2]{>}}},
    postaction={decorate},
    shorten >=0.4pt] (an12) -- (an11) ;
\draw[decoration={markings,mark=at position 1 with {\arrow[scale=2]{>}}},
    postaction={decorate},
    shorten >=0.4pt] (an12) -- (an22) ;
\draw[dashed,decoration={markings,mark=at position 1 with {\arrow[scale=2]{>}}},
    postaction={decorate},
    shorten >=0.4pt] (a21) -- (an11);
\draw[dashed,decoration={markings,mark=at position 1 with {\arrow[scale=2]{>}}},
    postaction={decorate},
    shorten >=0.4pt] (a22) -- (an12);
\node[label=45:$A_{n-1}(n-1)$] (am11) at (0,-4) [nod] {};
\node[label=45:$\Lambda_n$] (am12) at (0,-5) [nod] {};
\node[label=45:$\Lambda_{n-1}$] (am21) at (1,-4) [nod] {};
\draw[dashed,decoration={markings,mark=at position 1 with {\arrow[scale=2]{>}}},
    postaction={decorate},
    shorten >=0.4pt] (am11) -- (a12);
\draw[dashed,decoration={markings,mark=at position 1 with {\arrow[scale=2]{>}}},
    postaction={decorate},
    shorten >=0.4pt] (am21) -- (a22);
\draw[decoration={markings,mark=at position 1 with {\arrow[scale=2]{>}}},
    postaction={decorate},
    shorten >=0.4pt] (am11) -- (am21) ;
\draw[decoration={markings,mark=at position 1 with {\arrow[scale=2]{>}}},
    postaction={decorate},
    shorten >=0.4pt] (am12) -- (am11) ;
\end{tikzpicture}
}
\caption{Oriented graph $\G$ corresponding to the system of inequalities \eqref{eq:system}.}
\label{fig:graph}
\end{figure}

\subsection{
Concrete realization of Littlewood-Richardson measure}
\label{subsec:keylemma}
The following proposition is the key component in the proof of Proposition \ref{lem:firstrowbound}. 
It gives a concrete realization of the first coordinate of a random weight distributed 
according to Littlewood-Richardson measure.

\begin{proposition}
\label{lem:first-row-A}
Let $\lambda,\mu$ be weights.
Let $A=\big(a_l(i)\big)$ be a random pattern of shape $\lambda$ 
(sampled with the uniform distribution)
and let $B=\big(b_l(i)\big)$ be a random pattern of shape $\mu$ 
(also sampled with the uniform distribution), we assume that $A$ and $B$ are independent. 

Let $\nu=(\nu_1,\dots,\nu_n)$ be a random weight distributed according to the 
Littlewood-Richardson measure  $P_{\lambda,\mu}$; then
\begin{equation}
\label{eq:combinatorial-B}
 \nu_1 \stackrel{\text{dist}}{=} \max_{\substack{k,l\geq 1, \\ k+l=n+1}}  a_k + b_l,
\end{equation}
where $\stackrel{\text{dist}}{=}$ denotes the equality of distributions of random variables.
\end{proposition}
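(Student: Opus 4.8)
The plan is to express the Littlewood-Richardson measure in terms of Gelfand-Tsetlin patterns and then extract the distribution of the first coordinate. I would start from the classical fact that the Littlewood-Richardson coefficient $c_{\lambda,\mu}^{\nu}$ counts the Gelfand-Tsetlin patterns (equivalently, the Littlewood-Richardson skew tableaux) of skew shape $\nu/\lambda$ and content $\mu$, or -- in the form best adapted here -- that it equals the number of ways to fill in a pattern interlacing between $\lambda$ and $\nu$ whose successive row differences form a tableau of weight $\mu$. In parallel, the dimension $d_\rho$ of the irreducible $\GL(n)$-representation with highest weight $\rho$ equals the number of Gelfand-Tsetlin patterns of shape $\rho$ (this is exactly the content of the system \eqref{eq:system}). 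The idea is therefore to realize a $P_{\lambda,\mu}$-distributed $\nu$ as the terminal shape obtained by a random branching process, and track only its top coordinate.

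The key step is a bijective/counting identity. I would argue that the weighted count $c_{\lambda,\mu}^{\nu}\, d_\nu$ -- the dimension of the isotypic component $[\nu]$ -- equals the number of pairs consisting of a pattern $A$ of shape $\lambda$ and a pattern $B$ of shape $\mu$ that are ``compatible'' in the sense that a certain combinatorial operation (stitching $A$ on top of $B$, or running the Littlewood-Richardson/Pieri process row by row) produces the shape $\nu$. More precisely, the Gelfand-Tsetlin pattern of shape $\lambda$ records the restriction chain $\GL(1)\subset\cdots\subset\GL(n)$, so choosing $A$ uniformly among $d_\lambda$ patterns picks a uniformly random vector in the representation space; tensoring with the analogous random vector from $\mu$ and decomposing gives precisely the Littlewood-Richardson measure on the resulting irreducible component. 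Summing the LR coefficients against dimensions, $\sum_\nu c_{\lambda,\mu}^\nu d_\nu = d_\lambda d_\mu$, confirms the normalization, so \eqref{eq:LR-measure} is genuinely the pushforward of the uniform measure on pairs $(A,B)$ under the map $(A,B)\mapsto \nu$.

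It then remains to identify the top coordinate $\nu_1$ of that output shape with $\max_{k+l=n+1} a_k+b_l$. Here I would use the explicit description of the branching/Pieri step: when one adds a box-row governed by $\mu$ to the pattern of $\lambda$, the largest entry in the new top row is obtained by the Robinson–Schensted–Knuth insertion of the $\mu$-word into the $\lambda$-pattern, and the resulting first part is a max-plus (tropical) combination of the interlacing data. Concretely, writing $a_1\le\cdots\le a_{n-1}\le a_n=\lambda_1$ for the top row of $A$ and $b_1\le\cdots\le b_{n-1}\le b_n=\mu_1$ for that of $B$, the interlacing constraints in \eqref{eq:system} propagated up the stitched pattern force the top entry of the combined pattern to be exactly $\max_{k+l=n+1}(a_k+b_l)$; this is the tropicalization of the Cauchy/Pieri identity and can be seen directly by induction on $n$, peeling off one level of the pattern at a time and checking that the maximum over $k+l=n+1$ is stable under the recursion. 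The main obstacle I anticipate is precisely this last identification: getting the right bijective model for $c_{\lambda,\mu}^\nu d_\nu$ in terms of pattern pairs and then verifying, without drowning in indices, that the first coordinate of the stitched shape is governed by the stated max-plus formula rather than by some more complicated functional of $A$ and $B$. Once the model is fixed, the max-plus computation is a routine, if slightly delicate, induction.
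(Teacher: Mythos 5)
Your overall strategy (realize $P_{\lambda,\mu}$ combinatorially via pattern/tableau pairs and the plactic/RSK product, then read off the first coordinate) is the paper's strategy, but the step you yourself flag as the crux contains a genuine error: the max-plus identity you want to prove by ``propagating the interlacing constraints up the stitched pattern'' is \emph{false pointwise}, so no induction on $n$ applied to the given pair $(A,B)$ can establish it. Concretely, take $n=2$, $\lambda=(1,0)$, $\mu=(2,1)$, let $S$ be the one-box tableau with entry $2$ (so $a_1=0$, $a_2=1$) and $T$ the tableau with first row $(1,2)$ and second row $(2)$ (so $b_1=1$, $b_2=2$). Then $\max(a_1+b_2,\,a_2+b_1)=2$, while the plactic product has reading word $(2,2,1,2)$ with $\LIS=3$, i.e.\ the output shape is $(3,1)$, not a shape with first row $2$. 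The identity \eqref{eq:combinatorial-B} holds only \emph{in distribution}, and the missing ingredient is a symmetry step: pointwise one has $\nu_1=\LIS\big(w(S)\cdot w(T)\big)=\max_k\big[\LIS\big(w(S)|_{\{1,\dots,k\}}\big)+\LIS\big(w(T)|_{\{k,\dots,n\}}\big)\big]$, and the second term is \emph{not} $b_{n+1-k}$ of $T$; the paper converts it into $a_{n+1-k}\big(\alpha(T)\big)$ using the alphabet-reversing antiautomorphism $\alpha$ (read the word backwards and replace $i$ by $n+1-i$), and then uses that $\alpha$ is a shape-preserving involution, so $\alpha(T)$ is again uniform of shape $\mu$ and independent of $S$. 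Without this (or an equivalent) argument your ``routine induction'' cannot close, as the counterexample shows.

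Two smaller points. First, your justification that the pushforward of the uniform measure on pairs $(A,B)$ is $P_{\lambda,\mu}$ --- ``a uniform pattern picks a uniformly random vector in the representation space, and tensoring and decomposing gives the LR measure'' --- is not meaningful as stated (a pure tensor of basis vectors does not lie in a single isotypic component); the correct and easy route is the plactic Littlewood--Richardson rule $S_\lambda\cdot S_\mu=\sum_\nu c_{\lambda,\mu}^\nu S_\nu$ together with $d_\rho=\#\{\text{tableaux of shape }\rho\}$, which is exactly how the paper proves that the shape of $S\cdot T$ is $P_{\lambda,\mu}$-distributed. Second, tableaux and RSK only make sense for Young diagrams, while the proposition is stated for arbitrary weights (possibly with negative parts); you need the reduction step of twisting by a power of the determinant, $\lambda\mapsto\lambda+p\one$, which shifts patterns and leaves dimensions, LR coefficients and the LR measure invariant. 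These two points are repairable; the pointwise max-plus claim is the real gap.
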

We postpone its proof until Section \ref{sec:proof:em:first-row-A}.
The remaining part of the current section is devoted to preparation to this proof.

\subsection{Polynomial representations}

\emph{Polynomial} irreducible representations of $\GL(n)$ play a special role.
Such a polynomial representation corresponds to a weight $(\lambda_1,\dots,\lambda_n)\in\Z^n$ 
such that $\lambda_1\geq \cdots \geq \lambda_n\geq 0$ are \emph{non-negative} integers. 
A weight with this property is called a \emph{Young diagram} and can be represented 
graphically as shown on Figure \ref{fig:young-diagram} (we use the English notation for 
drawing Young diagrams). Polynomial representations 
are associated to very rich combinatorial structures related to Young diagrams 
and \emph{Young tableaux} which we will explore in Section \ref{subsec:Young-tableaux}.

\begin{figure}
\centering
\begin{tikzpicture}
\draw[thick] (0,0) -- (9,0) -- (9,-1) -- (7,-1) -- (7,-2) -- (3,-2) -- (3,-3) -- (0,-3) -- cycle;
\clip (0,0) -- (9,0) -- (9,-1) -- (7,-1) -- (7,-2) -- (3,-2) -- (3,-3) -- (0,-3);
\draw (0,-3) grid (10,0);
\end{tikzpicture}

\caption{Young diagram $(9,7,3)$.}
\label{fig:young-diagram}
\end{figure}
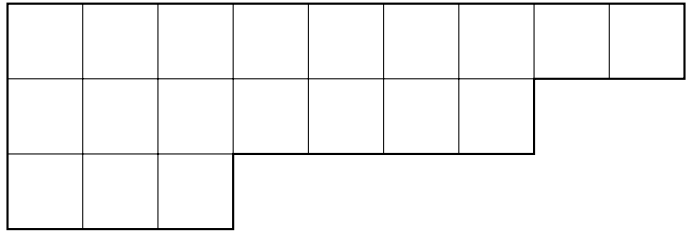

Many problems concerning irreducible representations can be reduced to the special case 
of irreducible \emph{polynomial} representations. This is also the case for Lemma 
\ref{lem:first-row-A}, the following lemma gives the details of this reduction.
\begin{lemma}
\label{lem:reduction-to-tableaux}
Assume that Lemma \ref{lem:first-row-A} is true under the additional assumption that 
weights $\lambda,\mu$ are \emph{Young diagrams}. Then Lemma \ref{lem:first-row-A} is true 
in general, without such an assumption.
\end{lemma}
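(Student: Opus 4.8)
The plan is to exploit the fact that tensoring an irreducible representation of $\GL(n)$ by a power of the determinant merely shifts its highest weight by a multiple of $\one=(1,\dots,1)$, and affects neither dimensions nor Littlewood–Richardson coefficients in an essential way, nor the combinatorics of Gelfand–Tsetlin patterns. Concretely, given arbitrary weights $\lambda=(\lambda_1,\dots,\lambda_n)$ and $\mu=(\mu_1,\dots,\mu_n)$, I would choose integers $p,q$ large enough that $\lambda'_i:=\lambda_i+p\geq 0$ and $\mu'_i:=\mu_i+q\geq 0$ for all $i$; then $\lambda'$ and $\mu'$ are Young diagrams. The representation with highest weight $\lambda'$ is $\lambda\otimes(\det)^{\otimes p}$, and similarly for $\mu'$.

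The key steps, in order, are as follows. First I would record the behaviour of Littlewood–Richardson coefficients under this shift: since $\lambda\otimes\mu$ and $\lambda'\otimes\mu'$ differ exactly by tensoring with $(\det)^{\otimes(p+q)}$, we have $c_{\lambda',\mu'}^{\nu'}=c_{\lambda,\mu}^{\nu}$ whenever $\nu'=\nu+(p+q)\one$, and all other LR coefficients for $\lambda',\mu'$ vanish; moreover $d_{\nu'}=d_\nu$, $d_{\lambda'}=d_\lambda$, $d_{\mu'}=d_\mu$. Consequently the Littlewood–Richardson measures are related by the bijective push-forward $\nu\mapsto\nu+(p+q)\one$; in particular, if $\nu$ is $P_{\lambda,\mu}$-distributed and $\nu'$ is $P_{\lambda',\mu'}$-distributed, then $\nu_1\stackrel{\text{dist}}{=}\nu'_1-(p+q)$. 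Second, I would observe that the set of Gelfand–Tsetlin patterns of shape $\lambda'$ is the image of the set of patterns of shape $\lambda$ under the map that adds $p$ to every entry: indeed the defining interlacing inequalities in \eqref{eq:system} are translation-invariant. Hence if $A$ is a uniformly random pattern of shape $\lambda$ then $A+p$ (entrywise) is a uniformly random pattern of shape $\lambda'$, and with the row-one notation this gives $a'_k\stackrel{\text{dist}}{=}a_k+p$ for each $k$ (jointly, via the same coupling), and likewise $b'_l\stackrel{\text{dist}}{=}b_l+q$; independence of $A$ and $B$ is preserved.

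Finally I would assemble the pieces. Assuming Lemma \ref{lem:first-row-A} holds for the Young diagrams $\lambda',\mu'$, we get
\[
\nu'_1\stackrel{\text{dist}}{=}\max_{\substack{k,l\geq 1,\\ k+l=n+1}} a'_k+b'_l \stackrel{\text{dist}}{=}\max_{\substack{k,l\geq 1,\\ k+l=n+1}} (a_k+p)+(b_l+q)= (p+q)+\max_{\substack{k,l\geq 1,\\ k+l=n+1}} a_k+b_l.
\]
Subtracting the constant $p+q$ from both sides and using $\nu_1\stackrel{\text{dist}}{=}\nu'_1-(p+q)$ from the first step yields exactly \eqref{eq:combinatorial-B} for the original weights $\lambda,\mu$. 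The argument is essentially bookkeeping; the only point requiring a little care — and the step I would treat as the main obstacle — is verifying cleanly that tensoring by $(\det)^{\otimes k}$ induces the claimed bijection on LR decompositions and that this bijection is precisely the translation $\nu\mapsto\nu+k\one$ on highest weights (so that the measures correspond), together with checking that all the distributional equalities can be realized through a single consistent coupling rather than merely marginally. Once that is in place, the reduction is complete.
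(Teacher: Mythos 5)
Your proposal is correct and follows essentially the same route as the paper's own proof: shifting $\lambda,\mu$ by multiples of $\one$ (i.e.\ tensoring with powers of the determinant), using the invariance of dimensions, Littlewood--Richardson coefficients and the measure $P_{\lambda,\mu}$ under such shifts, and translating Gelfand--Tsetlin patterns entrywise before applying the Young-diagram case. The coupling concern you raise is harmless, since the entrywise shift is deterministic and the conclusion of Lemma~\ref{lem:first-row-A} only asserts an equality of distributions.
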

\begin{proof}
\newcommand{\ekspoA}{p}
\newcommand{\ekspoB}{q}
For $\ekspoA\in\Z$ we denote by $\Det^\ekspoA:\GL(n)\rightarrow \End(\C)$
the one-dimensional representation given by 
an appropriate power of the determinant:
$$ \Det^\ekspoA(g):= \big( \det(g) \big)^\ekspoA \qquad \text{for $g\in\GL(n)$}, $$
where the right-hand side should be interpreted as a $1\times 1$ matrix, thus as an endomorphism 
of the one-dimensional vector space $\C=\C^1$.
Representation $\Det^\ekspoA$ is irreducible and corresponds to the highest weight 
$$\ekspoA\one:=(\ekspoA,\dots,\ekspoA)\in\Z^n.$$

Kronecker tensor product $\lambda\otimes \Det^\ekspoA$ of an irreducible representation 
$\lambda=(\lambda_1,\dots,\lambda_n)$ with $\Det^\ekspoA$ is again an irreducible representation 
which corresponds to the shifted weight 
$$\lambda+\ekspoA\one:=(\lambda_1+\ekspoA,\dots,\lambda_n+\ekspoA).$$

The dimensions of irreducible representations, Littlewood-Richardson coefficients 
and the Littlewood-Richardson measure are invariant under 
such shifts: 
\begin{align*} d_{\lambda+\ekspoA \one} =&d_\lambda, \\
 c_{\lambda+\ekspoA \one, \mu+\ekspoB \one}^{\nu+(\ekspoA+\ekspoB) \one} =& c_{\lambda,\mu}^\nu, \\
 P_{\lambda+\ekspoA \one, \mu+\ekspoB \one}\big(\nu+(\ekspoA+\ekspoB) \one\big) =& P_{\lambda,\mu}(\nu),
\end{align*}
for arbitrary $\ekspoA,\ekspoB\in \Z$ and irreducible representations $\lambda,\mu,\nu$ of $\GL(n)$.

We use notations of Lemma \ref{lem:first-row-A}.
We denote $\lambda=(\lambda_1,\dots,\lambda_n)$, $\mu=(\mu_1,\dots,\mu_n)$ and set 
$\ekspoA:=-\lambda_n$ and $\ekspoB:=-\mu_n$ so that weights $\lambda':=\lambda+\ekspoA \one$ 
and $\mu':=\mu+\ekspoB \one$ are \emph{Young diagrams}. We also set 
$\nu'=(\nu_1',\dots,\nu_n'):=(\ekspoA+\ekspoB) \one+\nu$.
Clearly, since $\nu$ is distributed according to Littlewood-Richardson measure 
$P_{\lambda,\mu}$ it follows that $\nu'$ is distributed according to Littlewood-\linebreak{}Richardson measure 
$P_{\lambda',\mu'}$.

We define shifted patterns $A'=\big(a_l(i)+\ekspoA \big)$ and 
$B'=\big(b_l(i)+\ekspoB \big)$. Clearly $A'$ and $B'$ are random patterns of 
shape $\lambda'$ and $\mu'$ respectively.

We apply Lemma \ref{lem:first-row-A} to Young diagrams $\lambda'$, $\mu'$, random weight $\nu'$ 
and random  patterns $A'$, $B'$. It follows that
$$\nu_1+(\ekspoA+\ekspoB)=\nu'_1 \stackrel{\text{dist}}{=} \max_{\substack{k,l\geq 1, \\ k+l=n+1}}  
a'_k + b'_l= \max_{\substack{k,l\geq 1, \\ k+l=n+1}}  (a_k+\ekspoA) + (b_l+\ekspoB)$$
which shows that Lemma \ref{lem:first-row-A} holds true for weights $\lambda$ and $\mu$ as desired.
\end{proof}

\subsection{Young tableaux, Robinson-Schensted-Knuth correspondence and the plactic monoid}
\label{subsec:Young-tableaux}
We recall some basic notations related to Young tableaux, Robinson-Schensted-Knuth 
correspondence and the plactic monoid. A good treatment of these topics is given in Part I of the book \cite{Fulton}.

\subsubsection{Tableaux}
\label{subsec:tableaux}

\newcommand{\numA}{\textcolor{red}{1}}
\newcommand{\numB}{\textcolor{PineGreen}{2}}
\newcommand{\numC}{\textcolor{blue}{3}}

\begin{figure}
\centering
\begin{tikzpicture}
\fill[pattern color=blue!20,pattern=north west lines] (0,0) -- (9,0) -- (9,-1) -- (7,-1) -- (7,-2) -- (3,-2) -- (3,-3) -- (0,-3) -- cycle;
\fill[fill=green!10] (0,0) -- (8,0) -- (8,-1) -- (4,-1) -- (4,-2) -- (0,-2) -- cycle;
\fill[fill=white] (0,0) -- (5,0) -- (5,-1)  -- (0,-1) -- cycle;
\fill[pattern color=red!20,pattern=north east lines] (0,0) -- (5,0) -- (5,-1)  -- (0,-1) -- cycle;
\draw[thick] (0,0) -- (9,0) -- (9,-1) -- (7,-1) -- (7,-2) -- (3,-2) -- (3,-3) -- (0,-3) -- cycle;
\clip (0,0) -- (9,0) -- (9,-1) -- (7,-1) -- (7,-2) -- (3,-2) -- (3,-3) -- (0,-3);
\draw (0,-3) grid (10,0);
\node at (0.5,-0.5) {$1$};
\node at (1.5,-0.5) {$1$};
\node at (2.5,-0.5) {$1$};
\node at (3.5,-0.5) {$1$};
\node at (4.5,-0.5) {$1$};
\node at (5.5,-0.5) {$2$};
\node at (6.5,-0.5) {$2$};
\node at (7.5,-0.5) {$2$};
\node at (8.5,-0.5) {$3$};
\node at (0.5,-1.5) {$2$};
\node at (1.5,-1.5) {$2$};
\node at (2.5,-1.5) {$2$};
\node at (3.5,-1.5) {$2$};
\node at (4.5,-1.5) {$3$};
\node at (5.5,-1.5) {$3$};
\node at (6.5,-1.5) {$3$};
\node at (0.5,-2.5) {$3$};
\node at (1.5,-2.5) {$3$};
\node at (2.5,-2.5) {$3$};
\end{tikzpicture}

\caption{Example of a tableau $T$ in the alphabet $\{1,2,3\}$ filling the Young diagram $(9,7,3)$ 
from Figure \ref{fig:young-diagram}. The boxes were colored in order to improve visibility.
The corresponding word is given by \protect\linebreak $w(T)=(\numC,\numC,\numC,\numB,\numB,\numB,\numB,\numC,\numC,\numC,\numA,\numA,\numA,\numA,\numA,\numB,\numB,\numB,\numC)$.}
\label{fig:tableau}
\end{figure}

A \emph{semi-standard tableau} (or, shortly, \emph{tableau}) $T$ is a filling of the boxes a given 
Young diagram $\lambda$ with letters from the alphabet $\{1,\dots,n\}$ with the property that the filling 
should be weakly increasing along
each row, and strictly increasing down a column, see Figure \ref{fig:tableau}. 
The value of $n$ will be fixed so we do not have to specify it for each tableau separately.
We also say that \emph{Young diagram $\lambda$ is the shape of tableau $T$}.

For a given tableau $T$ we set $a_l(i)$ to be the number of boxes in the $i$th row of $T$ filled with numbers 
$\leq l$. It is easy to check that so defined $A=\big(a_l(i)\big)$ is a pattern; furthermore for any 
Young diagram $\lambda$ this gives a bijective correspondence between tableaux of shape 
$\lambda$ and patterns of shape $\lambda$. In the following we will identify a tableau 
with the corresponding pattern.

\subsubsection{Words}
A \emph{word} $w=(w_1,\dots,w_\ell)$ is a sequence of the elements of the alphabet $\{1,\dots,n\}$. 
We recall that the \emph{insertion tableau} $P(w)$ of $w$ is defined as the semi-standard tableau 
obtained by \emph{Schensted row insertion algorithm} 
applied iteratively to the letters $w_1,\dots,w_\ell$. For a given tableau $T$ we denote by $w(T)$ the 
word obtained by reading the entries of $T$ along the lines, from left to right and from the bottom line 
to the top one, see Figure \ref{fig:tableau}. This word has a property that $T=P\big(w(T)\big)$.

For a word $w=(w_1,\dots,w_\ell)$ we denote by $\LIS(w)$ the
\emph{length of the longest (weakly) increasing subsequence} of $w$, i.e.~the length of the longest 
sequence $i_1<\cdots<i_k\in\{1,\dots,\ell\}$ such that
$$ w_{i_1} \leq \cdots \leq w_{i_k}.$$
It is well-known 
that if $\lambda=(\lambda_1,\dots,\lambda_n)$ is the shape of the 
insertion tableau $P(w)$ then $\LIS(w)=\lambda_1$ is equal to the length of the first row of $\lambda$.

\subsubsection{Multiplication of tableaux, plactic monoid and plactic Littlewood-Richardson rule}

We consider the \emph{free monoid} in alphabet $\{1,\dots,n\}$, which is just the set of words equipped 
with a multiplication $\cdot$ given by concatenation of words. Let us identify two words $w$ and 
$w'$ (we denote it $w\equiv w'$) if and only if the corresponding insertion tableaux are equal: $P(w)=P(w')$. 
One can show that $w\equiv w'$ and $v\equiv v'$ implies that $w\cdot v\equiv w'\cdot v'$ thus 
multiplication $\cdot$ is well defined on the equivalence classes of $\equiv$.
The set of such equivalence classes of $\equiv$ equipped with multiplication $\cdot$ is 
called \emph{plactic monoid}. 

Map $P$ gives a bijection between the elements of the plactic monoid and tableaux; thus the multiplication 
in the plactic monoid can be used to define \emph{multiplication of tableaux} which will be denoted by 
the same symbol $\cdot$. Alternatively, the product $S \cdot T:=P\big( w(S) \cdot w(T) \big)$ of tableaux 
$S$ and $T$ is defined as the insertion tableau corresponding to the concatenation of the words 
corresponding to the original tableaux.


Recall that the \emph{plactic Schur polynomial} 
is defined as a formal sum
$$ S_\lambda := \sum_T T$$
of all  tableaux with shape $\lambda$. 
\emph{Plactic Littlewood-Richardson rule} 
says that
\begin{equation}
\label{eq:plactic-LR}
 S_\lambda \cdot S_\mu = \sum_\nu c_{\lambda,\mu}^\nu S_\nu, 
\end{equation}
where $c_{\lambda,\mu}^\nu$ are the usual Littlewood-Richardson coefficients.

\subsubsection{Involution on tableaux}
Let us consider an antiautomorphism $\alpha$ of the free monoid defined on the generators by 
$\alpha(i):=n+1-i$. Alternatively, $\alpha$ is an involution on words defined by reading the word 
backwards and by reversing the order in the alphabet. Plactic monoid can be equivalently described 
as the free monoid divided by \emph{plactic relations} (\emph{Knuth relations})
which are fulfilled by generators $x,y,z\in\{1,\dots,n\}$:
\begin{align*} 
y\cdot z\cdot x & = y\cdot x \cdot z \qquad\text{if } x < y \leq z, \\
x\cdot z\cdot y & = z\cdot x \cdot y \qquad\text{if } x \leq y < z.
\end{align*}
Since $\alpha$ preserves these plactic relations, $\alpha$ gives rise to an antiautomorphism of the plactic monoid. 

If we identify the elements of the plactic monoid with tableaux, the antiautomorphism $\alpha$ 
becomes an involution on the set of tableaux. It can be described explicitly as follows:
for a given tableau $T$ we replace each entry $i$ by $\alpha(i)=n+1-i$ and we rotate the tableau by angle $\pi$, 
thus obtaining a \emph{skew tableau}, see Figure \ref{fig:tableau-rotated}. After rectifying it 
(by an application of Sch\"utzerberger's \emph{jeu de taquin}),
we obtain $\alpha(T)$. Alternatively, $\alpha(T)= P\big( \alpha(w(T) \big)$.
Greene's theorem 
shows that involution $\alpha$ maps the set of tableaux of a 
given shape into itself.

\begin{figure}
\centering
\begin{tikzpicture}
\fill[pattern color=blue!20,pattern=north west lines] (-0,-0) -- (-9,-0) -- (-9,1) -- (-7,1) -- (-7,2) -- (-3,2) -- (-3,3) -- (0,3) -- cycle;
\fill[fill=green!10] (0,0) -- (-8,0) -- (-8,1) -- (-4,1) -- (-4,2) -- (0,2) -- cycle;
\fill[fill=white] (0,0) -- (-5,0) -- (-5,1)  -- (0,1) -- cycle;
\fill[pattern color=red!20,pattern=north east lines] (0,0) -- (-5,0) -- (-5,1)  -- (0,1) -- cycle;
\draw[thick] (0,0) -- (-9,0) -- (-9,1) -- (-7,1) -- (-7,2) -- (-3,2) -- (-3,3) -- (0,3) -- cycle;
\clip (-0,-0) -- (-9,-0) -- (-9,1) -- (-7,1) -- (-7,2) -- (-3,2) -- (-3,3) -- (0,3) ;
\draw (-10,0) grid (0,3);
\node at (-0.5,0.5) {$3$};
\node at (-1.5,0.5) {$3$};
\node at (-2.5,0.5) {$3$};
\node at (-3.5,0.5) {$3$};
\node at (-4.5,0.5) {$3$};
\node at (-5.5,0.5) {$2$};
\node at (-6.5,0.5) {$2$};
\node at (-7.5,0.5) {$2$};
\node at (-8.5,0.5) {$1$};
\node at (-0.5,1.5) {$2$};
\node at (-1.5,1.5) {$2$};
\node at (-2.5,1.5) {$2$};
\node at (-3.5,1.5) {$2$};
\node at (-4.5,1.5) {$1$};
\node at (-5.5,1.5) {$1$};
\node at (-6.5,1.5) {$1$};
\node at (-0.5,2.5) {$1$};
\node at (-1.5,2.5) {$1$};
\node at (-2.5,2.5) {$1$};
\end{tikzpicture}

\caption{Skew tableau obtained from the tableau from Figure \ref{fig:tableau} after rotating by angle $\pi$ and
replacing each entry $i$ by $\alpha(i)=n+1-i$.}
\label{fig:tableau-rotated}
\end{figure}
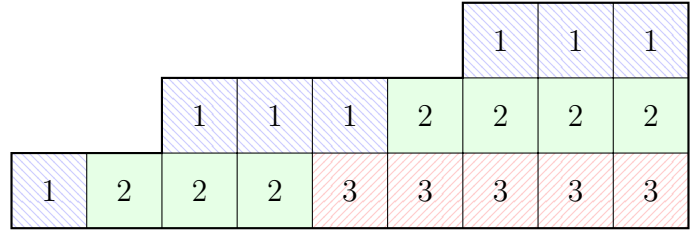

\subsection{Concrete model for Littlewood-Richardson measure}

The following lemma is a simple reformulation of well-known combinatorics of the 
representation theory in the language of probability theory.

The following is the key ingredient for the proof of Proposition \ref{lem:first-row-A}.
\begin{lemma}
\label{lem:first-row}
Let $\lambda,\mu$ be Young diagrams.
Let $S$ be a random  Young tableau of shape $\lambda$  
and let $T$ be a random  Young tableau of shape $\mu$.
We assume that $S$ and $T$ are sampled according to the uniform distribution given by their respective shape
constraints, and that they are independent.

Then,
\begin{enumerate}[label=\emph{(\alph*)}]
 \item \label{enum:lem:first-row} the distribution of the shape of the product $S \cdot T$ coincides with the 
 Littlewood-Richardson measure $P_{\lambda,\mu}$;
 \item \label{enum:lem:first-rowXXX} let $\nu=(\nu_1,\dots,\nu_n)$ be a random Young diagram distributed 
 according to the Littlewood-Richardson measure  $P_{\lambda,\mu}$, then
$$ \nu_1 \stackrel{\text{dist}}{=} \max_{\substack{k,l\geq 1, \\k+l=n+1}}  a_k(S) + a_l(T),$$
where $\stackrel{\text{dist}}{=}$ denotes the equality of distributions of random variables.
\end{enumerate}
\end{lemma}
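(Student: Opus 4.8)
The plan is to prove part \ref{enum:lem:first-row} first, as it is the more fundamental statement, and then deduce part \ref{enum:lem:first-rowXXX} from it combined with the dictionary between tableaux and patterns recalled in Section \ref{subsec:tableaux}. For part \ref{enum:lem:first-row}, the starting point is the plactic Littlewood-Richardson rule \eqref{eq:plactic-LR}, $S_\lambda \cdot S_\mu = \sum_\nu c_{\lambda,\mu}^\nu S_\nu$. Expanding the left-hand side, the product $S_\lambda \cdot S_\mu = \sum_{S,T} S\cdot T$ runs over all pairs $(S,T)$ of tableaux of shapes $\lambda$ and $\mu$, and each such product $S\cdot T$ is a single tableau whose shape I will denote $\operatorname{sh}(S\cdot T)$. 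Comparing with the right-hand side, the number of pairs $(S,T)$ for which $S\cdot T$ has a prescribed shape $\nu$ equals $c_{\lambda,\mu}^\nu\, d_\nu$ (using that $S_\nu$ is the sum of the $d_\nu$ tableaux of shape $\nu$, and that multiplication of tableaux is a bijection at the level of the plactic monoid, so distinct pairs give distinct plactic elements). The total number of pairs is $d_\lambda d_\mu$. Hence, if $(S,T)$ is chosen uniformly at random among all such pairs — which is exactly the product of the two independent uniform distributions in the statement — the probability that $\operatorname{sh}(S\cdot T) = \nu$ equals $\frac{c_{\lambda,\mu}^\nu d_\nu}{d_\lambda d_\mu} = P_{\lambda,\mu}(\nu)$, which is \ref{enum:lem:first-row}.

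For part \ref{enum:lem:first-rowXXX}, by \ref{enum:lem:first-row} it suffices to compute $\nu_1 = \operatorname{sh}(S\cdot T)_1$, the length of the first row of the product tableau, in terms of $S$ and $T$. Recall from Section \ref{subsec:Young-tableaux} that $S\cdot T = P\big(w(S)\cdot w(T)\big)$, and that the length of the first row of an insertion tableau $P(w)$ equals $\LIS(w)$, the length of the longest weakly increasing subsequence of $w$. So I need to show that
$$\LIS\big(w(S)\cdot w(T)\big) \stackrel{\text{dist}}{=} \max_{\substack{k,l\geq 1,\\ k+l=n+1}} a_k(S)+a_l(T).$$
A weakly increasing subsequence of the concatenation $w(S)\cdot w(T)$ splits into a weakly increasing subsequence of $w(S)$ followed by one of $w(T)$, and for it to be globally weakly increasing the last letter taken from $w(S)$ must be $\leq$ the first letter taken from $w(T)$. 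The key observation is that within a single tableau the longest weakly increasing subsequence of the reading word using only letters $\leq k$ has length exactly $a_k(S)$ — indeed the boxes filled with values $\leq k$ form a sub-Young-diagram whose first-row length (equivalently, its own $\LIS$) is $a_k(S)$, and the reading word restricted to these letters is the reading word of that sub-tableau. Dually, the longest weakly increasing subsequence of $w(T)$ using only letters $\geq k+1$ (equivalently $\geq l$ with a suitable index) has length $a_{n}(T) - a_{k}(T)$, which after the reindexing $k+l = n+1$ I will massage into the form $a_l(T)$ by applying the involution $\alpha$ of Section \ref{subsec:Young-tableaux} (Greene's theorem guarantees $\alpha$ preserves the shape, hence the uniform distribution, so $a_l(\alpha(T))$ and $a_l(T)$ have the same distribution). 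Optimizing over the cut value $k$ then yields the claimed maximum, and the $\stackrel{\text{dist}}{=}$ (rather than honest equality) absorbs this use of $\alpha$.

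I expect the main obstacle to be the bookkeeping in the second paragraph: making precise the claim that a longest weakly increasing subsequence of the concatenated reading word decomposes optimally as "letters $\leq k$ from $S$, then letters $\geq$ something from $T$", and correctly matching the index ranges so that the answer comes out as $\max_{k+l=n+1} a_k(S) + a_l(T)$ rather than some off-by-one variant. The cleanest route is probably to invoke Greene's theorem in the form that relates $\LIS$ of a word (and, more generally, lengths of unions of increasing subsequences) to the shape of its insertion tableau, applied to the plactic product; combined with the observation that $a_k(S)$ is precisely the first-row length (i.e. $\LIS$) of the sub-tableau of $S$ on entries $\leq k$, and a symmetric statement for the "top" part of $T$ obtained via $\alpha$. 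I would also double-check the boundary cases $k=n$ (so $l=1$), where $a_n(S)=\lambda_1$ and $a_1(T)=b_1$, and $k=1$, $l=n$, to confirm the formula degenerates correctly; these should match the obvious bounds $\nu_1 \geq \lambda_1$ and $\nu_1\geq \mu_1$ that always hold for tensor products.
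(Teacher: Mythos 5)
Your overall route coincides with the paper's: part \ref{enum:lem:first-row} by reading off the plactic Littlewood--Richardson rule \eqref{eq:plactic-LR}, and part \ref{enum:lem:first-rowXXX} by writing $\nu_1=\LIS\big(w(S)\cdot w(T)\big)$, splitting a weakly increasing subsequence of the concatenation at a cut letter, identifying $\LIS\big(w(S)|_{\{1,\dots,k\}}\big)=a_k(S)$, and treating the $T$-part via the involution $\alpha$ together with a distributional replacement of $\alpha(T)$ by $T$. However, two of your intermediate claims are false as stated and need to be repaired. First, plactic multiplication restricted to pairs of tableaux of shapes $\lambda,\mu$ is \emph{not} injective: if distinct pairs always gave distinct plactic elements, each tableau of shape $\nu$ could arise from at most one pair, forcing $c_{\lambda,\mu}^{\nu}\le 1$, which is false in general. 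Fortunately the claim you actually need --- that exactly $c_{\lambda,\mu}^{\nu}\,d_\nu$ pairs $(S,T)$ satisfy $\operatorname{sh}(S\cdot T)=\nu$ --- is precisely the coefficient-wise content of \eqref{eq:plactic-LR} (each tableau of shape $\nu$ occurs with multiplicity $c_{\lambda,\mu}^{\nu}$ in the multiset of products), so simply delete the bijectivity remark and quote the rule; this is exactly the paper's argument, phrased there with normalized plactic Schur polynomials as uniform measures.

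Second, in part \ref{enum:lem:first-rowXXX} the cut must be weak, not strict: since subsequences are weakly increasing, the correct identity is $\nu_1=\max_k\big[\LIS\big(w(S)|_{\{1,\dots,k\}}\big)+\LIS\big(w(T)|_{\{k,\dots,n\}}\big)\big]$, with the letter $k$ permitted on both sides; with ``letters $\ge k+1$'' on the $T$-side the formula already fails for $w(S)=w(T)=(1)$, where the concatenation has $\LIS=2$ but every strict cut gives $1$. Moreover $\LIS\big(w(T)|_{\{k+1,\dots,n\}}\big)$ is \emph{not} $a_n(T)-a_k(T)$: the latter only counts first-row boxes of $T$ with entries $>k$ (for $T$ of shape $(2,1)$ with rows $1\,1$ and $2$, and $k=1$, the restricted $\LIS$ is $1$ while $a_2(T)-a_1(T)=0$). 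The correct statement, which your planned use of $\alpha$ delivers, is $\LIS\big(w(T)|_{\{k,\dots,n\}}\big)=a_{n+1-k}\big(\alpha(T)\big)$, coming from $w|_{\{k,\dots,n\}}=\alpha\big(\alpha(w)|_{\{1,\dots,n+1-k\}}\big)$ and $\LIS w=\LIS\alpha(w)$. Finally, the passage from $\alpha(T)$ back to $T$ must be made jointly, not coordinate-wise: what you need is $(S,\alpha(T))\stackrel{\text{dist}}{=}(S,T)$, which holds because $\alpha$ is a shape-preserving involution (so $\alpha(T)$ is again uniform on tableaux of shape $\mu$) and independence from $S$ is preserved; then the pointwise identity $\nu_1=\max_{k+l=n+1}\big[a_k(S)+a_l(\alpha(T))\big]$ yields the claimed equality in distribution. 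With these corrections your argument becomes the paper's proof.
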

\begin{proof}

We will identify a probability measure on the set of tableaux with the appropriate formal linear 
combination of tableaux with coefficients given by appropriate probabilities.
The dimension $d_\lambda$ is equal to the number of  tableaux of the shape 
$\lambda$, 
therefore the normalized plactic Schur polynomial
$$ \frac{1}{d_\lambda} S_\lambda $$
can be identified
to the uniform probability measure on the set of tableaux of shape $\lambda$. 

The plactic Littlewood-Richardson rule \eqref{eq:plactic-LR} can be equivalently written in the form
$$ \left( \frac{1}{d_\lambda}  S_\lambda \right) \cdot
\left( \frac{1}{d_\mu} S_\mu \right)= \sum_\nu \left( \frac{d_\nu \ c_{\lambda,\mu}^\nu}{d_\lambda d_\mu} \right)\left( \frac{1}{d_\nu} S_\nu\right).$$
The left-hand side corresponds to the distribution of the random tableau \mbox{$S \cdot T$}.
The right-hand side corresponds to the distribution of the random tableau filling a random Young diagram 
with the distribution $P_{\lambda,\mu}$. By comparing the distribution of the shape of the Young 
tableaux contributing to both sides of the equality we finish the proof of the first part of the lemma.

Let $\nu=(\nu_1,\dots,\nu_n)$ be the shape of the tableau $S\cdot T$; from the first part of this lemma it follows 
that the distribution of $\nu$ is given by the Littlewood-Richardson measure $P_{\lambda,\mu}$. Clearly,  
the length of the first row of $\nu$ fulfills
$$\nu_1=\LIS\big( w(S)\cdot w(T)\big);$$ 
it follows that
\begin{equation}
\label{eq:first-row}
\nu_1 = \max_k   \left[ \LIS \left( w(S)\big|_{\{1,\dots,k\}} \right) + \LIS \left( w(T)\big|_{\{k,\dots,n\}}\right) \right],
\end{equation}
where $w|_A$ denotes the word $w$ with all letters which do not belong to $A$ omitted.
In the following we will analyze the two summands contributing to the right-hand side of \eqref{eq:first-row}. 
We start with the first one.

We consider the tableau $S\big|_{\{1,\dots,k\}}$ obtained by removing from $S$ all boxes with entries bigger 
than $k$.
Clearly, 
$$w(S)\big|_{\{1,\dots,k\}}=w\left(S\big|_{\{1,\dots,k\}}\right).$$ 
In particular, 
\begin{equation}
\label{eq:summandA}
\LIS \left( w(S)\big|_{\{1,\dots,k\}} \right)=a_k(S) 
\end{equation}
is the length of the first row of tableau $S\big|_{\{1,\dots,k\}}$.

We turn now to the second summand on the right-hand side of \eqref{eq:first-row}.
Clearly, for any word $w$
$$ w\big|_{\{k,\dots,n\}} = \alpha\left( \alpha(w) \big|_{\{1,\dots,n+1-k\}} \right)
 $$
and 
$$ \LIS w = \LIS \alpha(w)$$
thus
$$ \LIS \left( w\big|_{\{k,\dots,n\}} \right)= \LIS\left( \alpha(w) \big|_{\{1,\dots,n+1-k\}} \right).
 $$

We define $T'=\alpha(T)$;  thus random tableaux $T'$ and $T$ have the same distribution. We have
\begin{equation}
\label{eq:summandB}
 \LIS \left( w(T)\big|_{\{k,\dots,n\}} \right)=\LIS \left(  w(T')  \big|_{\{1,\dots,n+1-k\}} \right)= a_{n+1-k}(T').  
\end{equation}

Equations \eqref{eq:first-row}, \eqref{eq:summandA}, \eqref{eq:summandB} finish the proof.
\end{proof}

\subsection{Proof of Proposition \ref{lem:first-row-A}}
\label{sec:proof:em:first-row-A}

\begin{proof}[Proof of Proposition \ref{lem:first-row-A}]
In Lemma \ref{lem:reduction-to-tableaux} we showed that it is enough to prove the result under 
additional assumption that $\lambda$ and $\mu$ are Young diagrams.
We use part \ref{enum:lem:first-rowXXX} of Lemma \ref{lem:first-row} and use the fact that there is a 
bijective correspondence between tableaux and patterns.
\end{proof}

\subsection{An application to random matrix theory}

In what follows, we state an interesting corollary of Proposition \ref{lem:first-row-A}.
This corollary is of purely random matrix nature, but to the best of our knowledge it seems to be new.

\begin{corollary}
\label{cor4rmt}
Let $A,B$ be independent Hermitian random matrices of the same size $n\times n$. Assume that both the distribution of $A$ and the distribution of $B$ is
invariant under unitary conjugation.
Then the largest eigenvalue of $A+B$ is a random variable which has the same distribution as
$$\max_{\substack{k,l\geq 1, \\k+l=n+1}} a_k+b_l,$$
where $a_k$ (resp.~$b_k$) is the random variable obtained by taking 
the largest eigenvalue of the $k\times k$ upper left corner of $A$ (resp.~$B$).
\end{corollary}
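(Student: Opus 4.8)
The plan is to deduce Corollary \ref{cor4rmt} from Proposition \ref{lem:first-row-A} by a semiclassical limit in which Gelfand-Tsetlin patterns degenerate to interlacing eigenvalue arrays of Hermitian matrices. The bridge is the classical fact (going back to Heckman, and to Baryshnikov and Okounkov--Vershik in this probabilistic form) that if one takes a sequence of highest weights $\lambda^{(N)}$ with $\tfrac1N \lambda^{(N)} \to x = (x_1 \geq \cdots \geq x_n)$, then the uniform measure on integer Gelfand-Tsetlin patterns of shape $\lambda^{(N)}$, rescaled by $\tfrac1N$, converges weakly to the distribution of the Gelfand-Tsetlin pattern (the array of eigenvalues of the successive upper-left corners) of a uniformly random Hermitian matrix with spectrum $x$; equivalently, to the pattern of $U \operatorname{diag}(x) U^*$ for Haar-random $U \in U(n)$. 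In particular the first row $(a_1,\dots,a_{n-1})$ of the rescaled pattern converges to the largest eigenvalues of the $1\times 1, \dots, (n-1)\times(n-1)$ corners, and $a_n = \lambda_1/N \to x_1$ is the largest eigenvalue of the full matrix.

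Concretely I would proceed as follows. First, reduce to the case where the distributions of $A$ and $B$ are supported on a single coadjoint orbit, i.e. $A = U \operatorname{diag}(x) U^*$ and $B = V \operatorname{diag}(y) V^*$ with $U,V$ independent Haar; the general invariant case follows by conditioning on the spectra and integrating, since both sides of the claimed identity are built by the same averaging over the (independent) eigenvalue distributions of $A$ and $B$. Second, fix rational approximations $x^{(N)} = \tfrac1N \lambda^{(N)}$ and $y^{(N)} = \tfrac1N \mu^{(N)}$ of $x$ and $y$. On the representation-theoretic side, the Littlewood-Richardson measure $P_{\lambda^{(N)},\mu^{(N)}}$, rescaled by $\tfrac1N$, converges to the distribution of the spectrum of $A+B$: this is again a standard semiclassical statement (the asymptotics of Littlewood-Richardson coefficients recover the Horn problem / Duistermaat-Heckman measure), and in particular the first coordinate $\nu_1/N$ converges to $\lambda_{\max}(A+B)$. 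Third, apply Proposition \ref{lem:first-row-A} at level $N$: $\nu_1 \stackrel{\text{dist}}{=} \max_{k+l=n+1} a_k + b_l$ where $A,B$ are uniform integer patterns of shapes $\lambda^{(N)},\mu^{(N)}$. Divide by $N$ and pass to the limit on both sides, using the pattern-convergence statement of the previous paragraph for the right-hand side and the Littlewood-Richardson convergence for the left-hand side; since $x \mapsto \max_{k+l=n+1} a_k+b_l$ is continuous in the pattern entries, weak convergence is preserved. This yields $\lambda_{\max}(A+B) \stackrel{\text{dist}}{=} \max_{k+l=n+1} a_k + b_l$ in the single-orbit case, and hence in general.

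The main obstacle is making the two semiclassical limits rigorous and, in particular, verifying tightness/continuity so that weak convergence survives the (unbounded) functionals involved — the entries of the rescaled patterns and of $\nu/N$ are a priori only bounded once one knows the limiting measures are compactly supported, which they are (spectra of fixed matrices, Horn polytope), but this has to be invoked. An alternative, more self-contained route that avoids quoting the semiclassical dictionary is to give a direct random-matrix proof of the identity: realize $a_k$ via the minimax characterization $a_k = \max_{\dim W = k} \min_{0\neq v \in W} \langle Av, v\rangle / \langle v, v\rangle$ over subspaces $W$ of the first $k$ coordinates, do the same for $b_l$ over the last $l$ coordinates, and compare with $\lambda_{\max}(A+B) = \max_{v} \langle (A+B)v,v\rangle/\langle v,v\rangle$; the inequality $\lambda_{\max}(A+B) \geq \max_{k+l=n+1}(a_k+b_l)$ is then immediate from choosing test vectors, while equality in distribution (not deterministically!) would be forced by the unitary invariance, mirroring exactly the combinatorial mechanism of \eqref{eq:first-row}. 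I expect the cleanest writeup to be the limiting-transition argument, with the minimax picture included as a remark explaining why the formula is natural.
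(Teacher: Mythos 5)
Your proposal is correct and follows essentially the same route as the paper's own sketch: condition on prescribed spectra, approximate them by rescaled highest weights, invoke the semiclassical convergence of rescaled uniform Gelfand--Tsetlin patterns to the corner-eigenvalue process and of rescaled Littlewood--Richardson measures to the spectrum of $A+B$ (the paper cites \cite{CS09} for both facts), then apply Proposition \ref{lem:first-row-A} at each level $N$ and pass to the limit. The only difference is bibliographical (Heckman/Baryshnikov versus \cite{CS09}) plus your optional minimax remark, which the paper does not include.
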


We will just sketch the main ideas of the proof and leave the details to the reader.

\begin{proof}[Sketch of the proof]
Without loss of generality we can assume that the eigenvalues of $A,B$ are prescribed. Indeed, if they are 
random, the proof can be completed by conditioning over prescribed
eigenvalues and a decomposition of measure type argument.

And if the eigenvalues of $A,B$ are prescribed,  the result follows from
Proposition \ref{lem:first-row-A} and successive applications of \cite{CS09}. Indeed, in 
\cite{CS09}, it is proved that if $A$ is a unitarily invariant selfadjoint random 
matrix and $\lambda^N=(\lambda_1^N\geq \cdots \geq \lambda_n^N)$, 
is a tuple of sequences of integers such that $\lambda_i^N/N$ 
converges to the $i$th largest eigenvalue
of $A$, then  the law of $(a_1,\dots,a_n)$ is the limit of the laws of 
$(a_1^N/N,\dots,a_n^N/N)$  as appearing in Proposition \ref{lem:first-row-A}
and corresponding to weight $\lambda^N$.
A similar statement holds for a random matrix $B$ and $\mu^N=(\mu_1^N\geq \cdots \geq \mu_n^N)$. It has been also shown in \cite{CS09} that the law of the largest eigenvalue of $A+B$ is the limit of the laws of $\nu_1^N/N$, where $\nu^N$ is distributed according to the Littlewood-Richardson measure $P_{\lambda^N, \nu^N}$.
We apply Proposition \ref{lem:first-row-A} to $\lambda^N$, $\mu^N$ and $\nu^N$
and pass to the limit.
\end{proof}

\section{The first row of a random pattern}
\label{sec:firstrowbound}

The main result of this section is the following lemma giving an upper bound on the atoms of the distribution 
of the first row of a random pattern with a given shape. This proposition is the key in the 
proof of Theorem \ref{theorem:main-theorem}.

\begin{proposition}
\label{lem:firstrowbound}
There exists some constant $D_n$ with the following property.
Let $\lambda$ be a weight and let $A=\big(a_l(i)\big)$ be a random 
pattern with shape $\lambda$. Then for any $x\in\Z$ and $1\le k \le n-1$:
$$ P\big( a_k = x ) \leq  D_n\  \frac{1}{\lambda_1-\lambda_{n+1-k}}.$$
\end{proposition}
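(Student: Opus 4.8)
The plan is to reduce everything to the case where $\lambda$ is a Young diagram (by the shift-invariance already exploited in the proof of Lemma~\ref{lem:reduction-to-tableaux}, since subtracting $\lambda_n\one$ changes neither the law of $a_k$ up to a global shift nor the difference $\lambda_1-\lambda_{n+1-k}$), and then to use the identification of patterns of shape $\lambda$ with semistandard tableaux of shape $\lambda$. Under this identification $a_k$ is the number of boxes in the first row of the random tableau $S$ filled with an entry $\le k$, equivalently the length of the first row of the truncated tableau $S|_{\{1,\dots,k\}}$, equivalently $\LIS\big(w(S)|_{\{1,\dots,k\}}\big)$. So the quantity $a_k$ is the length of the longest weakly increasing subsequence using only the letters $1,\dots,k$ in the reading word of a uniformly random tableau of shape $\lambda$.

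First I would record the trivial a priori bounds on the support of $a_k$. Since the first row of $S$ has length $\lambda_1$ and $a_k$ counts those of its entries that are $\le k$, and since the column-strictness forces the entry in row $i$, column $j$ to be at least $i$, the boxes of $S$ with entries $\le k$ form a sub-Young-diagram contained in $\lambda$ with at most $k$ rows; hence $a_k$ is at most $\lambda_1$ and at least $\lambda_1-\lambda_{k+1}$ (the first row contributes its boxes in columns $>\lambda_{k+1}$ automatically... more carefully: in column $j\le\lambda_{k+1}$ there are $\ge k+1$ boxes, so the entry in row $k+1$ of that column is $\ge k+1$, which says nothing directly about row $1$, so I should instead argue via $a_k(i)\ge a_k(i+1)$ and $a_k(k+1)=$ number of boxes in row $k+1$ with entry $\le k$, which is $0$). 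The clean statement is: the shape $\kappa$ of $S|_{\{1,\dots,k\}}$ satisfies $\lambda_i-\lambda_{i+1}\le$ (something), and in particular $a_k=\kappa_1$ ranges over an interval whose length is controlled by the shape. The real content, though, is an anti-concentration statement: $a_k$ cannot be too concentrated at any single value $x$, with the spreading quantified by $\lambda_1-\lambda_{n+1-k}$.

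The key step, and the main obstacle, is to prove this anti-concentration. The natural tool is an injection: I would try to build, for each tableau $S$ of shape $\lambda$ with $a_k(S)=x$, a family of $\gtrsim (\lambda_1-\lambda_{n+1-k})/D_n$ many tableaux of shape $\lambda$ with pairwise distinct values of $a_k$, in an injective way, thereby showing $P(a_k=x)\cdot(\lambda_1-\lambda_{n+1-k})\le D_n$. Concretely, I expect to use the interlacing structure of the pattern: $a_k(1)=a_k$ sits at the top of a chain $a_k(1)\ge a_k(2)\ge\cdots$ of entries of the pattern, and simultaneously the ``co-pattern'' obtained from the conjugate/complement controls $\lambda_1-a_k$ via the variables in rows $n+1-k$ and below. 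A promising route is to condition on all entries of the pattern except those on the first NW–SE diagonal through $a_k$, and observe that, with the rest fixed, $a_k$ is uniform on an interval of integers whose length is (at least a constant fraction of) $\lambda_1-\lambda_{n+1-k}$; then $P(a_k=x\mid\text{rest})\le 1/(\text{that length})$ and one integrates out. Making the ``interval of length $\ge c(\lambda_1-\lambda_{n+1-k})$'' claim precise — i.e.\ finding the right chain of pattern entries to leave free so that the interlacing inequalities collapse to a single interval constraint on $a_k$ of the stated width — is where the combinatorial work lies, and it presumably uses Proposition~\ref{lem:first-row-A} or its tableau avatar Lemma~\ref{lem:first-row} to relate $a_k$ to a genuine ``width'' $\lambda_1-\lambda_{n+1-k}$ rather than an artifact. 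I would also need to handle the degenerate case $\lambda_1=\lambda_{n+1-k}$ (the bound is vacuous there since the right-hand side is $+\infty$) and the edge cases $k=1$ and $k=n-1$ separately, but these should be routine once the main injection is set up. The constant $D_n$ will depend on $n$ through the number of pattern entries one has to fix, and getting it uniform in $\lambda$ is precisely the point of the argument.
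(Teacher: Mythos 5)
Your reduction to Young diagrams/tableaux and your identification of the right scale are fine (indeed $a_k$ ranges over $[\lambda_{n+1-k},\lambda_1]$, the lower endpoint coming from the chain $a_k(1)\geq a_{k+1}(2)\geq\cdots\geq a_{n-1}(n-k)\geq\lambda_{n+1-k}$), but the heart of the proposition --- the anti-concentration estimate --- is exactly what you have not proved, and the one concrete mechanism you sketch does not work as stated. If you condition on all pattern entries except $a_k(1)$ (or except a diagonal chain through it), the conditional support of $a_k$ is pinched by the \emph{fixed} neighbours (e.g.\ $a_{k-1}(1)$ and $a_{k+1}(1)$ may be fixed adjacent or equal), so it is in general an interval of length far smaller than $\lambda_1-\lambda_{n+1-k}$, possibly a single point; moreover, when a whole chain is left free, the conditional marginal of $a_k$ is the marginal of the uniform measure on a smaller order polytope and is not uniform. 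What is true is only an \emph{averaged} statement, and proving that average bound uniformly in $\lambda$ is the entire difficulty; likewise the injection ``each pattern with $a_k=x$ maps to $\gtrsim(\lambda_1-\lambda_{n+1-k})$ patterns with distinct $a_k$'' is the right kind of statement but is nowhere constructed, and a naive shift of the first-row tail violates the interlacing with the second row. Also, your guess that the proof should invoke Proposition \ref{lem:first-row-A} is backwards: Proposition \ref{lem:firstrowbound} concerns a single random pattern and is logically independent of it; the two are only combined later in the proof of the main theorem.

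For comparison, the paper proves the anti-concentration by convex geometry on the Gelfand--Tsetlin polytope rather than by a combinatorial injection. One first compares the counting measure on integer patterns with Lebesgue measure on the real polytope, uniformly in $\lambda$ (Lemma \ref{lem:disc-vs-cont}); this requires discarding the forced coordinates $a_l(i)=\lambda_i$ when $\lambda_{n+i-l}=\lambda_i$, since otherwise the polytope is lower-dimensional and has zero volume. Then, writing $\Cont^y$ for the real slice $\{a_k(1)=y\}$, one uses that the extreme slices $\Cont^{\lambda_1}$ and $\Cont^{\lambda_{n+1-k}}$ are nonempty: taking a point $a$ in the far extreme slice and the cone over any slice $\Cont^y$ with apex $a$, convexity of the polytope gives $\vol_d \Cont \geq \frac{\lambda_1-\lambda_{n+1-k}}{2d}\,\vol_{d-1}\Cont^y$, so no slice can carry more than $O\bigl(1/(\lambda_1-\lambda_{n+1-k})\bigr)$ of the total mass; transferring back to the lattice via Lemma \ref{lem:disc-vs-cont} yields the proposition. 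If you want to salvage your outline, this cone argument is the precise replacement for your missing ``interval of length $\geq c(\lambda_1-\lambda_{n+1-k})$'' claim.
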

We postpone the proof 
of Proposition \ref{lem:firstrowbound}
until Section \ref{subsec:proof-of-firstrow}; the remaining part of the current section is a preparation for this proof.

\subsection{Taking degeneracy into account}
Let the weight $\lambda$ be fixed. The inequalities \eqref{eq:system} 
define a convex polyhedron in the space $\R^{n(n-1)/2}$.
For some choices of the weight $\lambda$ it might happen that that this polyhedron is of 
dimension smaller then the maximal dimension $\frac{n(n-1)}{2}$. 
This creates some difficulties; in the following, we explain how to avoid them.

Restricting the system of inequalities \eqref{eq:system} to one row and one column implies that
$$ \begin{matrix}
a_{l}(i) & \leq & \cdots & \leq & \lambda_i \\
\ler \\
\vdots \\
\ler \\
\lambda_{n+i-l},
\end{matrix}
$$
in other words if  $\lambda_{n+i-l} = \lambda_i$ then automatically $a_{l}(i)=\lambda_i$. Such variables are trivial from our viewpoint, 
thus it is enough to restrict our attention to the index set
$$ \indeks=\big\{ (l,i): l\in \{i,\ldots, n-1\},\ 
i\in\{1,\ldots, n-1\},\ \lambda_{n+i-l} < \lambda_i \big\}$$
and to study only  variables $\big(a_{l}(i): (l,i)\in \indeks \big)$.
We define $d=|\indeks|$. 
The set of solutions to the above system of inequalities \eqref{eq:system} in integer numbers 
(respectively, real numbers) will be denoted by $\Disc\subset \Z^d$  (respectively, by $\Cont\subset \R^d$).  
Thus there is a natural bijective correspondence between patterns of shape 
$\lambda$ and the elements of $\Disc$.

We denote by $\GG$ the oriented graph $\G$ in which:
\begin{itemize}
 \item every vertex $A_l(i)$ with $(l,i)\notin \indeks$ is glued to the vertex $\Lambda_i$, 
 \item all pairs of vertices $\Lambda_i$ and $\Lambda_j$ are glued together if $\lambda_i=\lambda_j$.
\end{itemize}
The graph $\GG$ encodes all inequalities fulfilled by the variables $\big( a_l(i): (l,i)\in \indeks \big)$.
The following lemma is elementary.
\begin{lemma}
The graph $\GG$ is acyclic.
\end{lemma}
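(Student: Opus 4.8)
The plan is to show that the only way a directed cycle could appear in $\GG$ is through the gluing operations, and then to rule this out by exhibiting a strictly monotone function on the vertices. First I would recall that the original graph $\G$ is a directed acyclic graph: in Figure \ref{fig:graph} every edge points either ``to the right'' (from $A_l(i)$ to $A_{l+1}(i)$, with $A_n(i):=\Lambda_i$) or ``upward'' (from $A_l(i)$ to $A_{l-1}(i-1)$, with the bottom-of-column vertex being $\Lambda_{n+i-l}$). Concretely, assign to each vertex $A_l(i)$ of $\G$ the real number $\phi\big(A_l(i)\big) := \lambda_i + \varepsilon(l-i)$ for a sufficiently small $\varepsilon>0$, and $\phi(\Lambda_i):=\lambda_i$; one checks directly that $\phi$ strictly increases along every edge of $\G$, which reproves that $\G$ is acyclic.

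Next I would analyze how the two gluing operations affect this potential. When we glue $A_l(i)$ to $\Lambda_i$ for $(l,i)\notin\indeks$, we have by definition $\lambda_{n+i-l}=\lambda_i$; the path in $\G$ from $A_l(i)$ down its column to $\Lambda_{n+i-l}$ and the path along its row to $\Lambda_i$ therefore connect two vertices on which $\lambda$ takes the same value, so after gluing all vertices $\Lambda_j$ with $\lambda_j=\lambda_i$ together (the second gluing), every vertex that gets identified with $\Lambda_i$ satisfies $\lambda_j=\lambda_i$. Hence the function $A_l(i)\mapsto \lambda_i$ and $\Lambda_i\mapsto\lambda_i$ descends to a well-defined function $\psi$ on the vertex set of $\GG$, and $\psi$ is non-decreasing along every edge of $\GG$ (since every edge of $\G$ is of the form ``increase the row index $i$'' or ``keep $i$ and change $l$'', and $\lambda$ is weakly monotone).

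It remains to upgrade ``non-decreasing'' to ``no directed cycles''. The point is that along any directed cycle in $\GG$, $\psi$ must be constant, so the cycle lives entirely inside a single ``$\psi$-level set''. Within such a level set — a maximal block of consecutive rows $i$ on which $\lambda_i$ is constant, together with the variable-vertices glued into it — I would argue that the surviving edges form a DAG: edges of $\G$ between two vertices of the same $\psi$-value are precisely the horizontal edges $A_l(i)\to A_{l+1}(i)$ within a fixed row $i$ of that block, and possibly their endpoints glued to $\Lambda_i$; these are linearly ordered by $l$ and the gluing only ever identifies a tail segment of such a row with the single vertex $\Lambda$ of the block, so no cycle can form. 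Thus any directed cycle in $\GG$ would have to change $\psi$ somewhere, contradicting that $\psi$ is constant on cycles; hence $\GG$ is acyclic.

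The main obstacle is bookkeeping rather than conceptual: one must be careful about the exact combinatorial structure of $\GG$ after the two gluings — in particular, verifying that inside a level set the glued graph really does not close up into a loop, and that the two gluing operations interact correctly (the first gluing sends $A_l(i)$ to $\Lambda_i$ only when $\lambda_{n+i-l}=\lambda_i$, which is exactly the condition making $\Lambda_i$ and $\Lambda_{n+i-l}$ identified by the second gluing, so the construction is consistent). I expect that once the potential function $\psi$ is in place, the remaining case analysis is short, which is presumably why the authors call the lemma ``elementary''.
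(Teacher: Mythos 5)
The paper itself gives no proof of this lemma (it is dismissed as elementary), so your argument has to stand on its own. The overall strategy --- a weakly monotone potential $\psi$ descending to $\GG$, plus an analysis of its level sets --- is viable, but the level-set analysis has a genuine gap. Your claim that the only edges of $\G$ joining two vertices with the same $\psi$-value are the horizontal edges $A_l(i)\to A_{l+1}(i)$ is false exactly in the degenerate situation the lemma is about: if $\lambda_i=\lambda_{i+1}=c$, then every vertical edge $A_{l+1}(i+1)\to A_l(i)$, and in particular the edge $\Lambda_{i+1}\to A_{n-1}(i)$, also lies inside the level set $\{\psi=c\}$. Thus a level set is not a disjoint union of single rows with glued tails; it is the whole block of consecutive rows on which $\lambda$ is constant, carrying both horizontal and vertical edges, including edges of $\G$ that start at glued vertices (the $\Lambda_j$ and the glued $A_l(i)$) and could a priori lead back out of the glued class and close a cycle through it. Your sentence ``these are linearly ordered by $l$ \dots\ so no cycle can form'' addresses neither phenomenon.

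The gap is fixable, and the fix is where the actual content of the lemma sits. For a block of rows $i_0\le i\le i_1$ with $\lambda_i=c$, the vertex $A_l(i)$ is glued iff $\lambda_{n+i-l}=c$, i.e.\ iff $l\ge n+i-i_1$. For a vertical edge $A_{l+1}(i+1)\to A_l(i)$ inside the block the gluing conditions for source and target coincide (both read $l\ge n+i-i_1$), so such an edge is either a self-loop of $\GG$ --- in particular $\Lambda_{i+1}\to A_{n-1}(i)$ is one, because $\lambda_{i+1}=\lambda_i$ forces $(n-1,i)\notin\indeks$ --- or it joins two non-glued vertices; since horizontal edges leaving a glued vertex stay in the glued tail of their row, no edge of the level set exits the glued class, which is therefore a sink within its level. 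Among the non-glued vertices of the block, both remaining edge types strictly increase the quantity $l-2i$ (horizontal edges by $+1$, vertical edges from $l-1-2i$ to $l-2i$), so they support no directed cycle either; combined with the monotonicity of $\psi$ across level sets this proves the lemma. A separate small slip: with $\phi(\Lambda_i):=\lambda_i$ your potential $\phi$ is not strictly increasing along the edges $A_{n-1}(i)\to\Lambda_i$, nor along vertical edges inside a constant block of $\lambda$; taking the secondary term $\varepsilon(l-2i)$ instead of $\varepsilon(l-i)$, with $\Lambda_i$ treated as $A_n(i)$, repairs this and in fact gives a single strictly increasing potential on all of $\G$.
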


\subsection{Continuous versus discrete}
Our goal is to understand the uniform measure on $\Disc$. There is also a simpler object: the uniform 
measure on $\Cont$. In the following we investigate how these two measure are related to each other. 
The following Lemma addresses the question of 
how intersections of $(b+I)$ with $\Disc$ and $\Cont$ are related to each other,
where the unit cube $I$ is defined as
$$ I = \left\{ \big(a_{l}(i)\big): |a_l(i)| < \frac{1}{2} \right\} \subset \R^d. $$
\begin{lemma}
\label{lem:disc-vs-cont}
There is some constant $C>0$ (which depends only on $n$) with the property that for any weight $\lambda$ 
and any lattice point $b\in \Z^d$
\begin{multline*} b\in \Disc \iff
(b+I) \cap \Disc \neq \emptyset \iff \vol \big[ (b+I) \cap \Cont\big] \geq C \\ \iff 
 (b+I) \cap \Cont  \neq \emptyset.
\end{multline*}
\begin{proof}
Since the lattice point $b$ is the only element of $(b+I) \cap \Z^d$, 
if $(b+I) \cap \Disc$ is non-empty then it is equal to $\{b\}$. This explains why the first two conditions are equivalent.

Now we suppose that $b\in\Disc$. For $m\in\Z$ we denote
$$ \indeks_m = \big\{ (l,i) \in \indeks: b_{l}(i)=m \big\}$$
and we denote by $\Cont_m \subset \R^{|\indeks_m|}$ the set of solutions
of the system of inequalities \eqref{eq:system} over variables $a_{l}(i)$ such that $(l,i)\in \indeks_m$, 
subject to the additional requirement that
$$| a_{l}(i) - m | < \frac{1}{2}.$$
Since 
$ (b+I) \cap \Cont = \prod_m \Cont_m$
(where, in the right hand side of this equality, with the obvious identification of the coordinates, 
the multiplication denotes the Cartesian product),
it is enough to show that if $\indeks_m\neq \emptyset$, then
$ \vol \Cont_m$
is bigger than some universal positive constant.

We denote by $\GG_m$ the graph $\GG$ restricted to the following vertices:
\begin{itemize}
 \item vertices $A_l(i)$ with $(l,i)\in \indeks_m$,
 \item vertices $\Lambda_i$ with $\lambda_i=m$ (in fact, all such vertices from $\G$ are glued 
 together so they correspond to a single vertex in $\GG$).
\end{itemize}
The graph $\GG_m$ encodes all inequalities fulfilled by the collection of variables $\big(a_{l}(i)\big)$ over 
$(l,i)$ such that $| a_{l}(i) - m | < \frac{1}{2}$.

Since $\GG_m$ is acyclic, it is possible to extend it to a linearly ordered set. Let us choose any 
such a linear extension. 
There are the following two cases:
\begin{itemize}
 \item the graph $\GG_m$ does not contain any vertex $\Lambda_\ell$; then the set of solutions which is 
 compatible with the selected linear order is a simplex with the volume $$ \frac{1}{|\indeks_m|!},$$
 \item the graph $\GG_m$ contains a vertex $\Lambda_\ell$; let us say that there are $p$ (respectively, $q$) 
 vertices $A_{l}(i)$ which are smaller (respectively, bigger) than $\Lambda_\ell$ with $p+q=|\indeks_m|$;
then the set of solutions which is compatible with the selected linear order is a product of two simplexes 
with the volume $$ \frac{1}{2^{p+	q} p! q!}.$$
\end{itemize}
Note that the simplex obtained by choosing a linear order has a smaller volume than $\Cont_m$, so that the above cases give us a lower bound. Now this finishes the proof that the first condition implies the third one.

The third condition trivially implies the fourth condition.

Assume that $(b+I) \cap \Cont  \neq \emptyset$.  Let $a$ be any element of this set.
The system of inequalities \eqref{eq:system} has a particularly nice form: if $a$ is a solution then also 
$\operatorname{round}(a)$ is a solution, where $\operatorname{round}$ denotes the (coordinate-wise) 
rounding of a real number to the closest integer. On the other hand $\operatorname{round}(a)=b$ 
therefore $b\in\Disc$ which finishes the proof that the 
fourth condition implies the first condition.
\end{proof}

\end{lemma}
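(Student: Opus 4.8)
The plan is to establish the fourfold equivalence through the implications $(1)\Leftrightarrow(2)$, $(3)\Rightarrow(4)$, $(4)\Rightarrow(1)$ and $(1)\Rightarrow(3)$, where I label the four conditions $b\in\Disc$, $(b+I)\cap\Disc\neq\emptyset$, $\vol[(b+I)\cap\Cont]\ge C$, $(b+I)\cap\Cont\neq\emptyset$ as $(1)$--$(4)$ in the order written. Two of these come for free: $(1)\Leftrightarrow(2)$ because $b$ is the unique lattice point of the open cube $b+I$, so $(b+I)\cap\Disc$ equals $\{b\}$ when $b\in\Disc$ and is empty otherwise; and $(3)\Rightarrow(4)$ because a set of positive volume is nonempty.

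For $(4)\Rightarrow(1)$ I would exploit the rigid form of the system \eqref{eq:system}: every constraint there compares two quantities, each of which is either a coordinate or one of the integers $\lambda_1,\dots,\lambda_n$. Coordinate-wise rounding to the nearest integer is a nondecreasing map that fixes the $\lambda_i$, hence it carries $\Cont$ into $\Disc$. Applied to any $a\in(b+I)\cap\Cont$ it returns $\operatorname{round}(a)=b$ (because $|a_l(i)-b_l(i)|<\tfrac{1}{2}$), so $b\in\Disc$.

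The substance of the lemma, and the step I expect to demand the most care, is $(1)\Rightarrow(3)$: a lower bound on $\vol[(b+I)\cap\Cont]$ that is uniform in $\lambda$. Assume $b\in\Disc$. The key observation is a ``level'' decoupling: if two coordinates carry distinct integer $b$-values $m<m'$ then, since $b$ itself solves \eqref{eq:system}, the one with value $m$ is the lesser side of the inequality relating them, and that inequality holds automatically once each coordinate is confined to its own unit interval, the first being then $<m+\tfrac{1}{2}\le m'-\tfrac{1}{2}<$ the second; likewise, a constraint against one of the $\lambda_i$ can bind only at the level $m=\lambda_i$. Grouping the indices $(l,i)\in\indeks$ according to the value $b_l(i)=m$ --- at most $d=|\indeks|\le\tfrac{n(n-1)}{2}$ distinct levels occur --- therefore produces a Cartesian product $(b+I)\cap\Cont=\prod_m\Cont_m$, where $\Cont_m$ is the part of the level-$m$ unit cube cut out by the inequalities among the level-$m$ variables, together with one frozen value $\lambda_i=m$ if such a vertex is present. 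It then suffices to bound each $\vol\Cont_m$ from below by a universal constant.

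For this I would invoke the acyclicity of $\GG$ stated above, which restricts to acyclicity of the level-$m$ graph $\GG_m$; fix any linear extension of $\GG_m$. The points of the level-$m$ unit cube respecting this order form a simplex of volume $1/|\indeks_m|!$ when $\GG_m$ has no $\Lambda$-vertex, and a product of two simplexes of volume $1/(2^{p+q}\,p!\,q!)$ with $p+q=|\indeks_m|$ when it has one; in either case this is at least $1/(2^{|\indeks_m|}\,|\indeks_m|!)$, and since the region lies inside $\Cont_m$ one gets $\vol\Cont_m\ge 2^{-|\indeks_m|}/|\indeks_m|!$. Multiplying over the levels and using $\sum_m|\indeks_m|=d$ together with $\prod_m|\indeks_m|!\le d!$ gives $\vol[(b+I)\cap\Cont]\ge 2^{-d}/d!\ge 2^{-N}/N!$ with $N=\tfrac{n(n-1)}{2}$, so $C:=2^{-N}/N!$ works and depends only on $n$. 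The one point that really needs watching is exactly this uniformity: one must check that passing to the reduced index set $\indeks$ prevents any $\Cont_m$ from degenerating to lower dimension, and that the simplex produced from a linear extension is genuinely contained in $\Cont_m$.
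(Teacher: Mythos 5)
Your argument is correct and follows essentially the same route as the paper's own proof: the equivalence of the first two conditions via the unique lattice point, the rounding map for the implication from nonemptiness of $(b+I)\cap\Cont$ back to $b\in\Disc$, and the level-by-level decomposition $(b+I)\cap\Cont=\prod_m \Cont_m$ combined with acyclicity of $\GG_m$, a linear extension, and simplex volume bounds for the uniform lower bound. The only addition is your explicit constant $C=2^{-N}/N!$ with $N=\tfrac{n(n-1)}{2}$, which is a harmless refinement of the paper's unspecified constant.
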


\subsection{Proof of Proposition \ref{lem:firstrowbound}}
\label{subsec:proof-of-firstrow}

\begin{proof}[Proof of Proposition \ref{lem:firstrowbound}]
For $x\in\Z$ (respectively, $x\in\R$) we denote by $\Disc^x\subset\Z^{d-1}$ (respectively, by 
$\Cont^x\subset\R^{d-1}$) the set of integer (respectively, real) solutions of the system of 
inequalities \eqref{eq:system} over variables $a_{l}(i)$, $(l,i)\in\indeks$, $(l,i)\neq (k,1)$, subject to 
the additional requirement that $a_{k}(1)=x$.

With respect to the subsets of $\R^{d-1}$ we denote by $\vol_{d-1}$ the usual Lebesgue volume while 
with respect to the subsets of $\Z^{d-1}$ we denote by $\vol$ the counting measure.

Now we fix $x\in \Z$. Lemma \ref{lem:disc-vs-cont} implies that
$$\int_{|x-y|<\frac{1}{2}} \vol_{d-1} \Cont^y \ dy = 
\sum_{b \in \Disc^x} \vol_d \big[ (b+I) \cap \Cont \big] \geq 
C\ \vol \Disc^x.
$$
It follows that there exists some $y$ such that
\begin{equation}
\label{eq:szacA}
 \vol_{d-1} \Cont^y  \geq 
C\ \vol \Disc^x. 
\end{equation}

It is a simple exercise to check that for $x_0\in\{\lambda_1,\lambda_{n+1-k}\}$ the set $\Cont^{x_0}$ is 
nonempty. Let us select the value of $x_0$ for which
$$ |x_0-y| \geq \frac{\lambda_1 - \lambda_{n+1-k}}{2}$$
and let us fix some $a\in\Cont^{x_0}$.

Under the obvious identifications $a\in \Cont^{x_0} \subset \Cont \subset \R^{d}$
and $\Cont^y \subset \Cont \subset \R^d$ we can consider the convex cone having $a$ as the vertex and 
$\Cont^y$ as the base. Clearly, $\Cont$ as a convex set contains this cone. It follows that for 
$t=(1-\alpha) x_0 + \alpha y$, with $0<\alpha<1$ we have
$$ \vol_{d-1} \Cont^t \geq \alpha^{d-1} \vol_{d-1}\Cont^y $$
hence 
\begin{equation}
\label{eq:szacB}
 \vol_d \Cont = \int_z \vol_{d-1} \Cont^z \ dz \geq 
\frac{\lambda_1-\lambda_{n+1-k}}{2 d} 
\vol_{d-1} \Cont^y.
 \end{equation}

Lemma \ref{lem:disc-vs-cont} shows that 
\begin{equation}
\label{eq:szacC}
 \vol \Disc \geq \vol_{d} \Cont.
 \end{equation}

Inequalities \eqref{eq:szacA}, \eqref{eq:szacB}, \eqref{eq:szacC} imply that
$$ P\big(a_k(S)=x) = \frac{\vol \Disc^x}{\vol \Disc} \leq \frac{\text{Const}}{\lambda_1-\lambda_{n+1-k}}.$$
\end{proof}

\section{Proof of the main result}
\label{sec:proof-main-result}

\newcommand{\lambdaB}{\bar{\lambda}}
\newcommand{\muB}{\bar{\mu}}
\newcommand{\nuB}{\bar{\nu}}

\begin{proof}[Proof of Theorem \ref{theorem:main-theorem}]
For a weight $\lambda=(\lambda_1,\dots,\lambda_n)$ we denote by
$\lambdaB = ( -\lambda_n, \dots, -\lambda_1)$ the weight corresponding to the 
contragredient representation. Since $d_\lambda=d_{\lambdaB}$ and 
$c_{\lambda,\mu}^{\nu} = c_{\lambdaB,\muB}^{\nuB}$ 
therefore the inequality \eqref{eq:main-gln} holds for $\lambda,\mu,\nu$ if and only if it 
holds for $\lambdaB,\muB,\nuB$.

Let $\lambda,\mu$ be fixed. By the pigeon hole principle, there exist $i,j\in \{1,\ldots ,n-1\}$ such that
\begin{align*} 
\lambda_i-\lambda_{i+1}\geq &\frac{\lambda_1-\lambda_n}{n-1}, \\
\mu_j-\mu_{j+1}\geq & \frac{\mu_1-\mu_n}{n-1}.
\end{align*}

For $i'=n-i$ and $j'=n-j$ we have analogous inequalities
\begin{align*} \lambdaB_{i'}-\lambdaB_{i'+1}\geq &\frac{\lambdaB_1-\lambdaB_n}{n-1} \\
\muB_{j'}-\muB_{j'+1}\geq & \frac{\muB_1-\muB_n}{n-1}.
\end{align*}

Since $(i+j)+(i'+j')=2n$, 
at least one of the following is true: $ i+j \leq n$ or $i'+j'\leq n$.
Therefore, without loss of generality
we will assume that $i+j \leq n$; if this is not the case, simply replace 
$\lambda, \mu, \nu$ by $\lambdaB, \muB, \nuB$. 

Let $A$ and $B$ be as in Lemma \ref{lem:first-row-A}.
Equation \eqref{eq:combinatorial-B}  implies that 
$$  P(\nu_1 = x)  \leq  \sum_{\substack{k,l\geq 1, \\k+l=n+1}}  P\big( a_k + b_l = x\big) $$
thus it is enough to find appropriate bounds for the distribution of the sum $a_k + b_l$ 
for each choice of $k$ and $l$ separately. The latter distribution is a convolution of two probability 
measures, thus
$$ P\big( a_k + b_l = x\big) \leq  
\min\left( \max_{z} P\big( a_k=z \big),\; \max_{z} P\big( b_l=z \big)  \right) 
$$
and it is enough to show that there is such a bound for $a_k$ or for $b_l$. Clearly,
$$ n+1-k\geq i+1  \quad \vee \quad n+1-l\geq j+1  $$
(otherwise $n+1=2n+2-(k+l)\leq i+j$ would contradict that $i+j\leq n$).
We will investigate these two cases separately.

In the first case,  
$$ \lambda_1 -\lambda_{n+1-k} \geq \lambda_i-\lambda_{i+1}\geq \frac{\lambda_1-\lambda_n}{n-1}. $$
We apply Lemma \ref{lem:firstrowbound}; in this way
$$ P\big( a_k=z ) \leq D_n \frac{1}{\lambda_1-\lambda_{n+1-k}} \leq
 D_n  \ \frac{n-1}{\lambda_1-\lambda_{n}}.$$

In the second case, 
$$ \mu_1 -\mu_{n+1-l} \geq \mu_j-\mu_{j+1}\geq \frac{\mu_1-\mu_n}{n-1}. $$
We apply Lemma \ref{lem:firstrowbound} for diagram $\lambda':=\mu$ and $k'=l$, in this way
$$ P\big( b_l=z ) \leq D_n \frac{1}{\mu_1-\mu_{n+1-l}} \leq
 D_n  \ \frac{n-1}{\mu_1-\mu_{n}}.$$
This completes the proof.
\end{proof}

\section{Saturation of the bound}
\label{sec:optimal}

Here we show that our bound is saturated in some natural sense. 

\begin{proposition}
For each $n$,
there exist two sequences  $(\lambda_N)$, $(\mu_N)$ of irreducible representations of $\GL(n)$ (respectively, $\SL(n)$) which tend to infinity
with the property that the inequality \eqref{eq:main-gln} of Theorem \ref{theorem:main-theorem} (respectively, inequality \eqref{eq:main-sln} of Corollary \ref{cor:main-theorem-SLn}) is saturated
up to a multiplicative constant that depends only on $n$ and not on $N$.
\end{proposition}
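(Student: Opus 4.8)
The strategy is to construct explicit sequences of rectangular Young diagrams for which both sides of \eqref{eq:main-gln} can be computed (or estimated) with precision. For $\GL(n)$, the natural candidate is to take $\lambda_N = \mu_N = (N,N,\dots,N,0)$, i.e.\ a rectangle with $n-1$ rows of length $N$ (so that $\lambda_1 - \lambda_n = N$), and then to identify a suitable target weight $\nu$ whose atom $P_{\lambda_N,\mu_N}(\nu)$ is of order $\frac{1}{N}$, matching the right-hand side $C_n \bigl( \frac{1}{N} + \frac{1}{N} \bigr) = \frac{2C_n}{N}$. Rather than working with Littlewood--Richardson coefficients directly, I would exploit the probabilistic model of Proposition \ref{lem:first-row-A}: the distribution of $\nu_1$ is that of $\max_{k+l=n+1} a_k + b_l$ where $A,B$ are uniform Gelfand--Tsetlin patterns of shapes $\lambda_N,\mu_N$.

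\textbf{Key steps.} First I would scale: by homogeneity the uniform pattern of shape $N\cdot(1,\dots,1,0)$ is, up to rounding, the $N$-fold dilation of the fixed polytope $\Cont$ attached to the rectangular shape $(1,\dots,1,0)$, so the rescaled random variables $a_k/N$ converge in distribution to absolutely continuous random variables supported on fixed intervals, with bounded densities (this is precisely the regime of \cite{CS09} invoked in the proof of Corollary \ref{cor4rmt}). Consequently $\nu_1/N$ converges to an absolutely continuous law with a bounded density $f$ on a fixed interval, so for the mode $x_N$ of $\nu_1$ we get $P(\nu_1 = x_N) \sim \frac{f_{\max}}{N}$, which is $\Theta(1/N)$. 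Second, I must pass from a lower bound on $P(\nu_1 = x_N)$, which only controls the marginal of the first coordinate, to a lower bound on a single atom $P_{\lambda_N,\mu_N}(\nu) = \frac{c_{\lambda_N,\mu_N}^\nu d_\nu}{d_{\lambda_N} d_{\mu_N}}$: since there are only $O(N^{n-1})$ weights $\nu$ that can occur with $\nu_1 = x_N$ (all $\nu_i$ lie in a window of size $O(N)$ and $n$ is fixed), some such $\nu$ has $P_{\lambda_N,\mu_N}(\nu) \geq \frac{c}{N}\cdot\frac{1}{N^{n-1}}$ — but this is too weak. Instead I would argue that the full weight $\nu = (\nu_1,\dots,\nu_n)$ has, after rescaling by $N$, a limiting absolutely continuous joint law on a fixed $(n-1)$-dimensional region with bounded density (again via \cite{CS09} / the Littlewood--Richardson measure's known asymptotics), so its mode has atom $\Theta(1/N^{n-1})$ — still not matching. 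The resolution: one should \emph{not} expect a single generic atom to saturate; rather, choose $\lambda_N, \mu_N$ degenerate enough (e.g.\ $\mu_N$ a single long row $(N,0,\dots,0)$, or both shapes supported so that the relevant polytope $\Cont$ is low-dimensional, $d=1$) so that the Littlewood--Richardson measure is concentrated on an interval's worth of weights, making $d=1$ and the atom genuinely $\Theta(1/N)$. I would pick, say, $\lambda_N = \mu_N = (N, 0, \dots, 0)$: then $\lambda\otimes\mu$ for these (essentially $\mathrm{Sym}^N \otimes \mathrm{Sym}^N$ of the standard representation, up to the $\GL$ vs.\ $\SL$ distinction) decomposes multiplicity-freely into representations indexed by a one-parameter family, and a direct dimension count gives $P_{\lambda_N,\mu_N}(\nu) = \Theta(1/N)$ for the dominant $\nu$, matching $\frac{1}{\lambda_1 - \lambda_n} = \frac1N$ (here $\lambda_n = 0$, so $\lambda_1 - \lambda_n = N$; for the $\SL$ statement \eqref{eq:main-sln} the relevant quantity is $\lambda_1 = N$, so the same example works).

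\textbf{Main obstacle.} The crux is calibrating the example so that the polytope dimension $d$ equals $1$ while the "width" $\lambda_1 - \lambda_n$ (resp.\ $\lambda_1$) still grows, forcing the Littlewood--Richardson measure to be spread over $\Theta(N)$ weights with comparable (hence each $\Theta(1/N)$) mass; using $\mathrm{Sym}^N$ of the defining representation is the clean way to do this since the Clebsch--Gordan-type decomposition there is explicit. Once the right example is in hand, the dimension asymptotics are a routine application of the Weyl dimension formula (the dimensions are polynomials in $N$ of equal degree on both sides, with the ratio $\Theta(1/N)$), and the claim follows for all $n$ simultaneously. The only genuinely delicate point is making sure the constant hidden in $\Theta$ depends on $n$ but not on $N$, which is automatic since everything is an explicit rational function of $N$ with $n$ fixed.
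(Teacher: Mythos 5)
Your final example coincides with the paper's: $\lambda_N=\mu_N=(N,0,\dots,0)$, where the Pieri rule makes the decomposition multiplicity-free with components $(A,B,0,\dots,0)$, $A+B=2N$, $A\geq B\geq 0$. The difference is in how the lower bound $\Theta(1/N)$ on an atom is extracted. The paper computes no dimensions at all: since the Littlewood--Richardson measure is supported on at most $\min(N,M)+1$ weights, the pigeonhole principle immediately yields an atom of mass at least $\frac{1}{\min(N,M)+1}$, which is comparable to the right-hand sides of \eqref{eq:main-gln} and \eqref{eq:main-sln}. You instead propose to exhibit a specific dominant $\nu$ via the Weyl dimension formula; this does work, since for $\nu=(2N-k,k,0,\dots,0)$ one has $d_\nu \asymp (2N-k)^{n-2}\,k^{n-2}\,(2N-2k+1)$, which for $k$ of order $N$ (say $k\approx N/2$) is of order $N^{2n-3}$, against $d_{\lambda}d_{\mu}\asymp N^{2n-2}$, giving $P_{\lambda_N,\mu_N}(\nu)=\Theta(1/N)$ with Littlewood--Richardson coefficient $1$. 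So your route buys an explicit maximizing $\nu$ (and the extra fact that the coefficients are all $1$, which the paper mentions but does not need), at the cost of a computation the pigeonhole argument avoids; be aware that your phrase about ``polynomials in $N$ of equal degree on both sides'' is loose --- the relevant degrees are $2n-3$ versus $2n-2$, which is exactly what produces the factor $1/N$. The lengthy preliminary discussion of rectangular shapes, the model of Proposition \ref{lem:first-row-A} and the asymptotics of \cite{CS09} is not needed for the final argument and can be dropped; as you yourself observe, those examples do not saturate the bound.
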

\begin{proof}
Take $\lambda = (N,0,\dots,0)$ and $\mu = (M,0,\dots,0)$. 
Then it is clear from Littlewood-Richardson rule that all the $\nu$ for which there is a non-zero probability $P_{\lambda,\mu}$
are of the form
$$(A,B,0,\dots,0)$$
with the constraints that $A\geq B\geq 0$, $A+B=N+M$, $A\geq \max(N,M)$.
There are $\min(N,M)$ choices.
By pigeon hole principle, at least one of these weights has a probability at least
$$ \frac{1}{\min(N,M)}$$
which is comparable to
the bound obtained in our 
Corollary \ref{cor:main-theorem-SLn}, and therefore also saturates the bound for the
main Theorem
\ref{theorem:main-theorem}.
Note that it follows from the proof that the Littlewood-Richardson coefficients appearing in this proof can not be large.
As a matter of fact, one can prove that they are all equal to $1$ in this case (but we do not need it in order to complete the proof).
\end{proof}

The above proposition shows that, for example, 
if we wanted, for a given $N$, the following inequality
$$P_{\lambda,\mu}(\nu) \leq  C_n \left(\frac{1}{\lambda_1-\lambda_n}+\frac{1}{\mu_1-\mu_n} \right)^\alpha$$
to be true for all $\mu, \nu$, then necessarily, $\alpha\leq 1$, and actually $\alpha=1$  is the best possible
constant.

Note that if the quantifier of Theorem \ref{theorem:main-theorem} is not on \emph{all}
choices of $\mu,\nu$ but just on some nice (possibly infinite) sets of pairs,
then it is possible to obtain much better estimates. 

As a first example, if in $\GL (3)$, one takes the collection $\mu_N=\nu_N=(2N,N,0)$,
it is easy to see that the largest dimension of a Littlewood-Richardson factor that can occur in 
$\mu_n\otimes \nu_n$ is at most of order $N^3$, which is less than $N^6$.
However if one in addition allows Littlewood-Richardson coefficients, then one obtains $N^5$.
Here we still saturate Theorem
\ref{theorem:main-theorem}
but not Corollary \ref{cor:main-theorem-SLn} any more.

As a second example, if one takes in $\GL(4)$ the sequence 
 $\mu_N=\nu_N=(3N,2N,N,0)$, one can see that 
 the largest dimension of a Littlewood-Richardson summand that can occur in 
$\mu_n\otimes \nu_n$ is at most of order $N^6$, which is less than $N^{12}$.
And if one in addition allows Littlewood-Richardson coefficients, then one obtains $N^9$.
Here, we are away from saturation both for Theorem
\ref{theorem:main-theorem}
and for Corollary \ref{cor:main-theorem-SLn}.

\section*{Acknowledgments}

B.C.'s research was supported by an NSERC Discovery grant and an ERA at the University of Ottawa.
He wishes to thank the organizers of the EPSRC Symposium Workshop \emph{``Interacting particle systems, 
growth models and random matrices''}, as well as Chungbuk National University and RIMS for their 
hospitality and the opportunity to meet with coworkers and make critical progress on the project.
He also thanks Ebrahim Samei for enlightening discussions.

H.H.L.'s research was supported by Basic Science Research Program through the National Research Foundation of Korea (NRF) funded by the Ministry of Education, Science and Technology (2012R1A1A2005963).

In the initial phase of research, P.\'S.~was a holder of a fellowship of \emph{Alexander von Humboldt-Stiftung}.
P.\'S.'s research has been supported by a grant of \emph{Deutsche Forschungsgemeinschaft} (SN 101/1-1).

\bibliographystyle{alpha}
\bibliography{dimsun}

\end{document}